\newtheorem{thm}{Theorem}
\newtheorem{cor}{Corollary}
\newtheorem{lem}{Lemma}
\newcommand{\Ocal}{\mathcal{O}}
\newcommand{\TODO}[1]{}
\newcommand{\EXCLUDE}[1]{}
 \providecommand{\Log}{\mathop{\rm Log}\nolimits}
 \newcommand{\mytitle}[1]{\goodbreak\medskip\noindent{\itshape #1}}
\date{19 March 2015}
\begin{document}
 
\title{An asymptotic distribution for $\left|L^\prime/L(1,\chi)\right|$}
 
\author{Sumaia Saad Eddin\thanks{S. Saad Eddin is supported by the Austrian Science Fund (FWF): Project F5507-N26, which is a part of the Special Research Program `` Quasi Monte Carlo Methods: Theory and Applications".}}
 
\maketitle

\begin{abstract}
Let $\chi$ be a Dirichlet character modulo $q$, let $L(s, \chi)$ be the attached Dirichlet $L$-function, 
and let $L^\prime(s, \chi)$ denotes its derivative with respect to the complex variable $s$. The main purpose of this paper is to give an asymptotic formula for the $2k$-th power mean value of $\left|L^\prime/L(1, \chi)\right|$ when $\chi$ ranges a primitive Dirichlet character modulo $q$ for $q$ prime. We derive some consequences, in particular a bound for the number of $\chi$ such that $\left|L^\prime/L(1, \chi)\right|$ is large. 
\end{abstract}
 
\noindent\textbf{Keywords:} Distribution function, Dirichlet L-function. 
 
\noindent\textbf{Mathematics Subject Classification (2000):} 11M06.
\section{Introduction and results} 
Let $\chi$ be a Dirichlet character modulo $q$, let $L(s, \chi)$ be the attached Dirichlet $L$-function, 
and let $L^\prime(s, \chi)$ denotes its derivative with respect to the complex variable $s$. The values at $1$ of Dirichlet $L$-series has received considerable attention, 
due to their algebraical or geometrical interpretation. Let us mention, in particular, the Birch and Swinnerton-Dyer conjectures, the Kolyvagin Theorem and the Gross-Stark conjecture. 
The BSD conjectures relate arithmetical problems about elliptic curves to analytic problems about the associated $L$-function. 
No proof of it is shown as of today ( that is the million-dollar question!). These conjectures have been studied by many people and proved only in special cases, although there is extensive numerical evidence for their truth. The earliest results on the BSD conjectures are the Coates and Wiles result~\cite{J.A} for elliptic curves with complex multiplication, and the Gross-Zagier Theorem~\cite{G.Z}. Related results are given in \cite{K}, \cite{R} and \cite{B.C.D.T}. The very recent paper in \cite{B.S} shows that a positive proportion of elliptic curves have analytic rank 0; i.e., a positive proportion of elliptic curves have a non-vanishing $L$-function at $s=1$. Applying Kolyvagin's Theorem, it follows that a positive proportion of all elliptic curves satisfy the conjectures BSD. See also ~\cite{B.G.Z}, \cite{B.S.D}, \cite{Ca} and \cite{Cr} for numerical evidence of these conjectures. A beautiful asymptotic formula of the fourth power moment in the $q$-aspect has been obtained by Health-Brown~\cite{H81}, for $q$ prime. Recently, Bui and Heath-Brown~\cite{H2010} gave an asymptotic formula for the fourth power mean of Dirichlet $L$-functions averaged over primitive characters to modulus $q$ and over $t \in [0,T]$ which is particularly effective when $q\geq T$ ( see also~\cite{Sound} and~\cite{Y} ). \\

Concerning the value of the derivative, Stark presented a conjecture for abelian $L$-functions with simple zeros at $s=0$, expressing the value of the derivative at $s=0$ in terms of logarithms of global units ( see a series of papers~\cite{St1}, \cite{St2}, \cite{St3} and \cite{St4}). Extending the conjectures of Stark, Gross stated in 1988 a relation between the derivative of the $p$-adic $L$-function associated to $\chi$ at its exceptional zero and the $p$-adic logarithm of a $p$-unit in the extension of a totally real field $E$ cut out by $\chi$, where $\chi$ is an abelian totally odd character of $E$. In 1996 Rubin~\cite{R2} gave an extension of the conjectures of Stark, attempting to understand the values $L^{(r)}( \chi, 0)$ when the order of vanishing $r$ may be greater than one. In~\cite{D.D.P}, the authors proved the conjecture of Gross when $E$ is a real quadratic field and $\chi$ is a narrow ring class character. Recently, Ventullo~\cite{V} proposed an unconditional proof of the Gross-Stark conjecture in rank one.\\

Less is known about $L^\prime/L$ evaluated 
also at  the point $s=1$, through these values are known to be fundamental in studying the distribution of primes since Dirichlet in 1837. \\

In this paper, we show that the values $\left|L^\prime/L(1, \chi)\right|$ behave according to a distribution law. Let us state this result formally.
\begin{thm}
\label{Thm2}
There exists a unique probability measure $\mu$ such that every continuous function $f$, we have
\begin{equation}
\label{eq02}
\frac{1}{q-2}\sum_{\substack{\chi \mkern3mu \mathrel{\textsl{mod}} \mkern3mu q\\ \chi \neq \chi_0}} f\left(\left|\frac{L^\prime}{L}(1, \chi)\right|\right) \mathrel{\mathop{\longrightarrow}\limits_{q \rightarrow +\infty}}
\int\limits_{0}^{+\infty}f(t)\, d\mu (t),
\end{equation}
where $\sum_{\chi \mkern3mu \mathrel{\textsl{mod}} \mkern3mu q}$  denotes the summation over all the primitive characters $\chi \mod q$. Here the variable $q$ ranges the odd primes.
\end{thm}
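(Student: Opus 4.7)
The plan is to establish \eqref{eq02} by the classical method of moments, taking as probabilistic model the random variable
\[
Z \;:=\; -\sum_{p\text{ prime}}\frac{X_{p}\,\log p}{p-X_{p}} \;=\; -\sum_{p}\log p\sum_{k=1}^{\infty}\frac{X_{p}^{\,k}}{p^{k}},
\]
where $(X_{p})_{p}$ is a sequence of independent random variables each uniformly distributed on the unit circle $\{z\in\mathbb{C}:|z|=1\}$. Each summand has zero mean and variance $\Ocal((\log p)^{2}/p^{2})$, so $Z$ converges almost surely, and I would take $\mu$ to be the law of $|Z|$. The theorem then reduces to showing that, for every integer $k\geq 0$,
\[
\frac{1}{q-2}\sum_{\substack{\chi\bmod q\\ \chi\neq\chi_{0}}}\left|\frac{L'}{L}(1,\chi)\right|^{2k} \;\longrightarrow\; \E|Z|^{2k}\qquad\text{as }q\to\infty,
\]
together with verifying Carleman's criterion $\sum_{k}(\E|Z|^{2k})^{-1/(2k)}=\infty$, which guarantees that $\mu$ is the unique probability measure with these moments. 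Convergence of moments then upgrades to weak convergence against every bounded continuous test function, and the extension to general continuous $f$ in \eqref{eq02} follows from tightness of $(\mu_{q})$, itself a byproduct of the uniform control of the second moment.

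The moment computation itself is to be carried out in three steps. \emph{First}, with a truncation parameter $y=y(q)\to\infty$ chosen as a small power of $\log q$, I would approximate
\[
\frac{L'}{L}(1,\chi) \;=\; T_{y}(\chi) \;+\; R_{y}(\chi),\qquad T_{y}(\chi):=-\sum_{n\leq y}\frac{\Lambda(n)\chi(n)}{n},
\]
and show, via the Hadamard product together with a standard contour shift, that $R_{y}(\chi)=o(1)$ for every primitive $\chi$ whose $L$-function has no zero in the disc $\{s:|s-1|\leq\delta\}$. \emph{Second}, I expand $|T_{y}(\chi)|^{2k}$ as a $2k$-fold sum over prime powers, average over $\chi$, and invoke the orthogonality relation valid for $q$ prime,
\[
\frac{1}{q-1}\sum_{\chi\bmod q}\chi(a)\overline{\chi}(b) \;=\; \mathbf{1}[a\equiv b\bmod q,\,(ab,q)=1];
\]
since all products involved are bounded by $y^{2k}<q$, the congruence collapses to an equality, reducing the average to a purely combinatorial sum equal to $\E|T_{y}^{\mathrm{rand}}|^{2k}$, where $T_{y}^{\mathrm{rand}}:=-\sum_{n\leq y}\Lambda(n)U_{n}/n$ and $U_{n}$ is the completely multiplicative extension of $(X_{p})$. \emph{Third}, letting $y\to\infty$, dominated convergence gives $\E|T_{y}^{\mathrm{rand}}|^{2k}\to\E|Z|^{2k}$, while the contribution from the exceptional characters is absorbed by Cauchy--Schwarz.

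The decisive obstacle lies in that third step: the existence of a zero-free disc of fixed radius around $s=1$ is not granted for any individual primitive $\chi$, and a single zero at distance $\delta$ from $1$ contributes a term of size $\delta^{-1}$ to $L'/L(1,\chi)$, which raised to the $2k$-th power can completely overwhelm the average. Controlling this would require a log-free zero-density estimate of the shape $\#\{\chi\bmod q:L(s,\chi)\text{ has a zero with }\Re(s)\geq 1-\delta\}\ll q^{c\delta}$, matched with a uniform pointwise bound $|(L'/L)(1,\chi)|\ll(\log q)^{2}$ valid outside a set of size $q^{o(1)}$, and a separate treatment of the small Siegel-type exceptional set. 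Balancing these three ingredients against each other is the technical heart of the argument; the remaining ingredients, including the subgaussian-type tail estimate for $Z$ needed for Carleman's criterion, are expected to be routine consequences of the independence of the $X_{p}$.
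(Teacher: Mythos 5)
Your strategy---method of moments plus Carleman's criterion---coincides at a high level with the paper's, which likewise deduces existence and uniqueness of $\mu$ from the moment asymptotic of Theorem~\ref{Thm1} together with the bound $M_k\leq(2k)!+(k/e)^{2k}$. Where you genuinely diverge, and in fact add something the paper does not have, is in the explicit construction: you identify $\mu$ as the law of $|Z|$ for the random Euler product $Z=-\sum_p X_p\log p/(p-X_p)$. This identification is correct: expanding $|Z|^{2k}$ and using $\E[U_a\bar U_b]=\mathbf 1[a=b]$ for the completely multiplicative extension $U_n$ of the $(X_p)$ gives
\[
\E|Z|^{2k}=\sum_{m\ge1}\frac{1}{m^2}\Bigl(\sum_{m_1\cdots m_k=m}\Lambda(m_1)\cdots\Lambda(m_k)\Bigr)^2=M_k,
\]
exactly the limiting moments in Theorem~\ref{Thm1}, and Carleman then forces $\mu$ to be the law of $|Z|$. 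The paper explicitly declines to describe $\mu$ (``getting an actual description of $\mu$ is a tantalizing problem''), goes instead through Hankel determinants and the Stieltjes problem, and leaves the arithmetical interpretation open; your random-Euler-product model is a cleaner and more informative packaging of the same information.

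The substantive difference in difficulty is that your proposal treats the moment asymptotic---which is the entire content of Theorem~\ref{Thm1}---as a three-step sketch and correctly flags the third step as the bottleneck. The paper carries this step out in a different way than you sketch: it works with a smoothed Mellin version $S_q(X)=\frac12\int_{(2)}G_q(s)X^{s-1}\Gamma(s-1)\,ds$ rather than a sharp Dirichlet polynomial truncation, shifts the contour to $\Re s=9/10$ for ``good'' characters and to $\Re s=1-c/\log q$ for the $\ll q^{3/5}$ ``bad'' ones (counted via the Huxley--Jutila density estimate, Lemma~\ref{density}), and handles the single Siegel-exceptional character by the pointwise bound $L'/L(s,\chi)\ll_\varepsilon q^\varepsilon$ of Lemma~\ref{ZFR}. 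Your truncation route would need essentially these same inputs; ``absorbed by Cauchy--Schwarz'' is too casual for the exceptional character, since its single contribution can be as large as $q^{\varepsilon k}$ and one must use the $q^\varepsilon$ bound together with the count of bad characters, not an $L^2$ inequality, to control it. Finally, both you and the paper implicitly invoke the Fr\'echet--Shohat theorem to pass from moment convergence plus determinacy to the stated weak convergence; you at least acknowledge the tightness step explicitly, which the paper glosses over.
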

We deduce the existence of $\mu$ by the general solution to the Stieltjes moment problem and the unicity by the criterion of Carleman.
  
This is an existence (and unicity) result, but getting an actual description of $\mu$ is a tantalizing problem. As we
noted before, it is likely to have a geometrical or arithmetical interpretation, on which our approach gives no information. Here is a plot of the distribution function 
\begin{equation}
\label{eq20}
D_q(t)= \frac{1}{q-2} \ \# \left\{ \chi\neq \chi_0 \mkern3mu \mathrel{\textsl{mod}} \mkern3mu q \ ; \ \left|\frac{L^\prime}{L}(1, \chi)\right| \leq t\right\},
\end{equation}
for $q=59, 101$ and $257$.
\begin{figure}[h]
\centering
\includegraphics{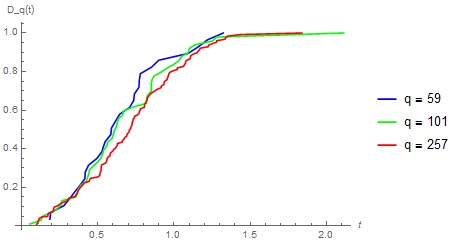}
\caption{The distribution  function $D_q(t)$.}
\label{fig}
\end{figure}
\\

The key to this result is to give an asymptotic formula of the $2k$-th power mean 
\begin{equation}
\label{eq0}
\sum_{\substack{\chi \mkern3mu \mathrel{\textsl{mod}} \mkern3mu q\\ \chi \neq \chi_0}}\left|\frac{L^\prime}{L}(1, \chi)\right|^{2k}
\end{equation}
Under the generalized Riemann hypothesis ( and later in~\cite{I.M2014} unconditionally), Ihara and Matsumoto~\cite{I.M2011} gave a stronger result related to the value-distributions of $\{ L^\prime/L(s, \chi)\}_{\chi}$ and of $\{\zeta^{\prime}/\zeta (s+i\tau)\}_{\tau}$, where $\chi$ runs over Dirichlet characters with prime conductors and $\tau$ runs over $\mathbb{R}$.

A similar study on $L(1, \chi)$ has been partially achieved; when $k=1$ by Walum~\cite{W}. The former based on the Fourier series to evaluate $\sum|L(1, \chi)|^2$ for $\chi$ ranges the odd characters modulo a prime number. In 1989, Zhang~\cite{Zh} obtained an exact formula for general integer $q \geq 3$. More recently, Louboutin~\cite{Lou} gave an exact formula for the twisted moments. His result generalizes previous works. For general $k$, Zhang and Weiqiong~\cite{Zh.W} gave an exact calculating formula for the $2k$-th power mean of $L$-functions with $k\geq 3$.\\

Here is our result.
\begin{thm}
\label{Thm1}
Let $ \epsilon >0$ and let $\chi$ be a primitive Dirichlet character modulo prime $q$. For $k$ is an arbitrary non-negative integer, we have
\[
\label{eq01}
\sum_{\substack{\chi \mkern3mu \mathrel{\textsl{mod}} \mkern3mu q\\ \chi \neq \chi_0}}\left|\frac{L^\prime}{L}(1, \chi)\right|^{2k}
= 
(q-2)\sum_{m\geq 1}\frac{\left(\sum\limits_{m=m_1\cdot m_2\cdots m_k}\Lambda(m_1)\cdots\Lambda(m_k)\right)^2}{m^2}+\Ocal_{\epsilon}\left(q^{9/10+\epsilon}\right),
\]
where $\Lambda$ is the Von Mangoldt's function.
\end{thm}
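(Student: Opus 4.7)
The plan is to start from the Dirichlet series
\[
-\frac{L'}{L}(s,\chi) = \sum_{n\geq 1}\frac{\Lambda(n)\chi(n)}{n^s}\qquad(\Re s>1)
\]
and to raise it to the $k$-th power. Setting $a_k(m) = \sum_{m=m_1\cdots m_k}\Lambda(m_1)\cdots\Lambda(m_k)$, one obtains, still for $\Re s > 1$,
\[
\left(-\frac{L'}{L}(s,\chi)\right)^{k} = \sum_{m\geq 1}\frac{a_k(m)\,\chi(m)}{m^s}.
\]
Since $a_k(m) \ll_\epsilon m^{\epsilon}(\log m)^{k}$, the series $\sum_m a_k(m)^2/m^2$ converges absolutely, and the target main term $(q-2)\sum_m a_k(m)^2/m^2$ is exactly what orthogonality of characters would extract on the diagonal $m = n$ if one simply substituted $s = 1$ in the identity above. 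The whole proof is a quantitative substitute for this formal calculation.

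To legitimately bring $s$ down to $1$, I would introduce a truncation parameter $x = x(q)$ and a fixed smooth compactly supported weight $\phi$, and work with
\[
S_k(x,\chi) = \sum_{m\geq 1}\frac{a_k(m)\,\chi(m)}{m}\,\phi\!\left(\frac{m}{x}\right).
\]
A Mellin--Barnes contour integration, starting on a line $\Re s = c > 0$ and shifted to $\Re s = -\delta$, together with the non-vanishing $L(1,\chi)\neq 0$ for $\chi\neq\chi_0$ (so that $L'/L(s,\chi)$ is holomorphic at $s = 1$), produces an identity of the shape
\[
\left(-\frac{L'}{L}(1,\chi)\right)^{k} = S_k(x,\chi) + R(\chi,x),
\]
where $R(\chi,x)$ is an explicit contour integral of $(L'/L(s+1,\chi))^{k}$ against the Mellin transform of $\phi$; the rapid decay of that Mellin transform in $|\Im s|$ keeps the integral well controlled in the vertical direction.

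Squaring the modulus, summing over primitive $\chi\neq\chi_0$, and using
\[
\sum_{\chi\neq\chi_0}\chi(m)\overline{\chi(n)} = (q-1)\,\mathbf{1}_{m\equiv n\,(q),\,(mn,q)=1} - \mathbf{1}_{(mn,q)=1},
\]
I would isolate the diagonal $m = n$, which gives $(q-2)\sum_{(m,q)=1} a_k(m)^2\,\phi(m/x)^2/m^2$. This matches the stated main term up to: (i) completing $\phi^{2}$ to $1$ at scale $x$; (ii) removing the coprimality condition $(m,q)=1$; and (iii) extending the sum to all $m\geq 1$. Each of these three adjustments is polylogarithmic in size, provided $x$ is at least a small positive power of $q$. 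The off-diagonal pairs with $m\equiv n\pmod{q}$ but $m\neq n$ force $|m-n|\geq q$ and, using the crude bound on $a_k$, contribute only $\Ocal_\epsilon(x^\epsilon)$ after multiplication by $(q-2)$; the pairs with $m\not\equiv n\pmod{q}$ assemble into a single squared character-weighted sum that is bounded uniformly in $q$.

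The genuine obstacle, and the step that will dictate the final exponent, is the treatment of the tail contribution $\sum_{\chi\neq\chi_0}|R(\chi,x)|^{2}$, together with the cross term $\sum_{\chi\neq\chi_0}S_k(x,\chi)\overline{R(\chi,x)}$ handled by Cauchy--Schwarz. The natural inputs are a multiplicative large-sieve (mean-value) inequality for the Dirichlet polynomials associated with $(L'/L(s+1,\chi))^{k}$ on the shifted line $\Re s = -\delta$, and pointwise convexity bounds for $L'/L$ inside the critical strip. One must also set aside the comparatively rare characters for which $L(1,\chi)$ is unusually small, since they inflate $L'/L(1,\chi)$; known density estimates of the flavour used in the fourth-moment results cited in the introduction suffice to include them without loss. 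Balancing the three competing error sources---the Mellin truncation tail (which decreases in $x$), the contour error (which grows in $x$), and the off-diagonal (which grows mildly in $x$)---against a suitable choice $x = x(q)$ is the delicate bookkeeping that yields the $\Ocal_\epsilon(q^{9/10+\epsilon})$ announced in Theorem~\ref{Thm1}.
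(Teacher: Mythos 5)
Your strategy---Mellin smoothing of $(L'/L(1,\chi))^{k}$, orthogonality to extract a diagonal main term $(q-2)\sum_{m}a_k(m)^2/m^2$, and a contour shift to control the remainder---is structurally the same as the paper's (which forms $G_q(s)=\sum_{\chi\neq\chi_0}(L'/L(s,\chi))^k(L'/L(s,\bar\chi))^k$, integrates against $X^{s-1}\Gamma(s-1)$, and shifts the line). Your diagonal and off-diagonal bookkeeping also matches the paper's Lemma~\ref{lem4}. The disagreement is confined to how the shifted contour is controlled, and there is a genuine gap there.

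You propose to estimate the remainder $R(\chi,x)$ using ``pointwise convexity bounds for $L'/L$ inside the critical strip'' and to exclude ``characters for which $L(1,\chi)$ is unusually small'' via density estimates ``of the flavour used in the fourth-moment results.'' Neither tool does the job. Unlike $L(s,\chi)$, the function $L'/L(s,\chi)$ has no convexity bound in the strip: it has a simple pole at every zero of $L(s,\chi)$, so it is not polynomially bounded on any vertical line $\Re s=1-\delta$ that a given $\chi$ might have a zero close to. The obstruction is not small values of $L(1,\chi)$ per se but \emph{zeros of $L(s,\chi)$ with $\Re\rho$ close to $1$}. The fourth-moment estimates of Heath-Brown and Bui--Heath-Brown are about $|L(1/2,\chi)|$ and are of no help here. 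What the paper actually uses is the Huxley--Jutila zero-density theorem (Lemma~\ref{density}): choosing $\sigma=9/10$, $T=q$ gives at most $O(q^{3/5})$ ``bad'' characters with a zero in $\{\Re s\ge 9/10,\,|\Im s|\le q\}$. For the remaining ``good'' characters one may shift to $\Re s=9/10$ and, because no zero obstructs, the partial-fraction expansion of $L'/L$ (Lemma~\ref{toupie}) gives $L'/L(s,\chi)\ll\log q$ on that line. For bad characters one retreats to the zero-free region $\Re s=1-c/\log q$ and uses the cruder $\log q$ bound (Lemma~\ref{ZFR}); the potential Siegel zero is treated separately with the bound $L'/L\ll_\varepsilon q^\varepsilon$, which is the source of ineffectivity. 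Balancing $qX^{-1/10}$ (good contour) against $q/X^{1-\varepsilon}$ (diagonal truncation) with $X=q^{1-\varepsilon}$ yields $q^{9/10+\varepsilon}$. Without the zero-density input and the good/bad dichotomy, your contour shift cannot be justified, and this is precisely the step you flagged as ``the genuine obstacle'' but left unresolved with tools that would not apply.
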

The error term is not effective as we use the full force of Siegel's theorem on exceptional zeros. 

We use an analytical method that is already used in~\cite{P.R2015} in contrast with the previous works that used essentially only elementary and combinatorial arguments. We introduce many simplifications in their intricate combinatorial argument, which is why we can handle the case of general $k$. 

This result is strong in two aspects: it is valid for general $k$ and we save a power of $q$ ( we did not try to
optimize this saving). The method of proof relies in particular on a suitable average density estimate for the zeros of Dirichlet $L$-functions. Let us note that this approach does not work for $\frac{L^\prime}{L}(1+it, \chi)$ when $t\neq 0$.\\

 We also computed 
\begin{equation*}
\sum\limits_{m\geq 1}\frac{\left(\sum\limits_{m=m_1\cdot m_2}\Lambda(m_1)\Lambda(m_2)\right)^2}{m^2}= 0.80508\cdots, 
\end{equation*}
and 
\begin{equation*}
\sum_{m\geq 1}\frac{\left(\sum\limits_{m=m_1\cdots m_4}\Lambda(m_1)\cdots\Lambda(m_4)\right)^2}{m^2}= 1.242\cdots. 
\end{equation*}
Here are some consequences of our main Theorem.  
\begin{cor}
\label{cor1}
There exists $a>0$ and  $c>0$ such that 
\begin{equation*}
\liminf_q \frac{1}{q-2} \# 
\left\{ \chi\neq \chi_0 \mkern3mu \mathrel{\textsl{mod}} \mkern3mu q \ ; \ \left|\frac{L^\prime}{L}(1, \chi)\right| \leq a\right\}\leq 1-c.
\end{equation*}
\end{cor}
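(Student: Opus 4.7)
The plan is to combine the second and fourth moment asymptotics from Theorem~\ref{Thm1} with a Paley--Zygmund-type inequality to produce a positive proportion of characters for which $|L^\prime/L(1,\chi)|$ is bounded away from zero. This is exactly the scenario in which the two explicit numerical constants (for $k=1$ and $k=2$) displayed just after Theorem~\ref{Thm1} are needed.

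First I apply Theorem~\ref{Thm1} with $k=1$ and $k=2$. Since the error term $\Ocal_{\e}(q^{9/10+\e})$ is of strictly smaller order than $q-2$, dividing by $q-2$ gives, as $q\to\infty$ through primes,
\[
A_q:=\frac{1}{q-2}\sum_{\substack{\chi\bmod q\\ \chi\neq\chi_0}}\left|\frac{L^\prime}{L}(1,\chi)\right|^{2}\longrightarrow M_2=0.80508\ldots,
\]
\[
B_q:=\frac{1}{q-2}\sum_{\substack{\chi\bmod q\\ \chi\neq\chi_0}}\left|\frac{L^\prime}{L}(1,\chi)\right|^{4}\longrightarrow M_4=1.242\ldots.
\]

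Next, writing $X_\chi:=|L^\prime/L(1,\chi)|^{2}$ and picking a parameter $\theta\in(0,1)$, I split $\sum_\chi X_\chi$ according to whether $X_\chi\leq\theta A_q$ or $X_\chi>\theta A_q$. The first piece is at most $\theta A_q(q-2)$, so after rearranging and applying Cauchy--Schwarz to the second piece,
\[
(1-\theta)A_q(q-2)\ \leq\ \sum_{X_\chi>\theta A_q}X_\chi\ \leq\ \bigl(\#\{\chi\neq\chi_0:X_\chi>\theta A_q\}\bigr)^{1/2}\bigl((q-2)B_q\bigr)^{1/2}.
\]
Squaring and dividing by $q-2$ yields the Paley--Zygmund bound
\[
\frac{\#\{\chi\neq\chi_0:\ |L^\prime/L(1,\chi)|^{2}>\theta A_q\}}{q-2}\ \geq\ \frac{(1-\theta)^{2}A_q^{2}}{B_q}.
\]
Taking $\theta=\tfrac12$ and letting $q\to\infty$, the right-hand side tends to $M_2^{2}/(4M_4)\approx 0.13$, which is positive.

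Therefore, choosing any $a$ with $a^{2}<M_2/2$ (e.g.\ $a=0.6$) and any $c<M_2^{2}/(4M_4)$ (e.g.\ $c=1/10$), for all sufficiently large primes $q$ one has
\[
\frac{1}{q-2}\#\bigl\{\chi\neq\chi_0\bmod q\ :\ |L^\prime/L(1,\chi)|>a\bigr\}\ \geq\ c,
\]
and taking complements gives exactly the statement of Corollary~\ref{cor1}. The argument presents no real obstacle: its entire analytic content is supplied by Theorem~\ref{Thm1}, and the only thing to verify is that the numerical constants $M_2$ and $M_4$ already computed in the paper satisfy $M_2^{2}<4M_4\cdot 1$ with $M_2^{2}/(4M_4)>0$, which they manifestly do. Essentially the same manipulation with higher moments would give quantitatively better pairs $(a,c)$ at the cost of computing further constants.
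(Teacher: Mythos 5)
Your approach is sound in outline but differs genuinely from the paper's. The paper goes through Theorem~\ref{Thm2}: since the moments $M_k$ grow, the limiting measure $\mu$ cannot be $\delta_0$, so $\mu$ gives positive mass to some compact interval $[a,b]\subset(0,\infty)$, and the Portmanteau inequality transfers this to a $\liminf$ bound on the empirical measures $\mu_q$. Your proof instead bypasses the weak convergence entirely and runs Paley--Zygmund directly on the moment asymptotics of Theorem~\ref{Thm1}; this is more elementary, does not require the existence or uniqueness of $\mu$, and in principle yields explicit admissible pairs $(a,c)$, which the paper's argument does not.

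However, there is a real indexing error that you must fix before the explicit numbers mean anything. By Theorem~\ref{Thm1}, the $2k$-th power moment converges (after dividing by $q-2$) to
\[
M_k=\sum_{m\geq 1}\frac{\Bigl(\sum_{m=m_1\cdots m_k}\Lambda(m_1)\cdots\Lambda(m_k)\Bigr)^{2}}{m^2}.
\]
Thus the \emph{second} moment ($k=1$) converges to $M_1=\sum_m\Lambda(m)^2/m^2$, which the paper never computes, and the \emph{fourth} moment ($k=2$) converges to $M_2=0.80508\ldots$. The constant $1.242\ldots$ is $M_4$, the limit of the \emph{eighth} moment ($k=4$). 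So with your choice $X_\chi=|L'/L(1,\chi)|^2$ you in fact have $A_q\to M_1$ and $B_q\to M_2$, not $A_q\to M_2$, $B_q\to M_4$ as written, and the numerical values $a=0.6$, $c=1/10$ you extract are not justified by what the paper actually computed. Two easy repairs: (i) take $X_\chi=|L'/L(1,\chi)|^{4}$ instead, so that $A_q\to M_2=0.80508\ldots$ and $B_q\to M_4=1.242\ldots$, making the ratio $M_2^2/(4M_4)$ genuinely the relevant one and giving a valid $a$ with $a^4<M_2/2$; or (ii) keep $X_\chi=|L'/L|^2$ and observe that $M_1=\sum_m\Lambda(m)^2/m^2\geq(\log 2)^2/4>0$ while $M_2<\infty$, which already yields some $a,c>0$ even without a decimal value for $M_1$. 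Either patch restores the argument and still gives a proof that is entirely independent of Theorem~\ref{Thm2}.
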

To do so, it is enough to note that $\mu\neq \delta_0$ where $\delta_0$ is the measure of Dirac at $0$ ( compare the moments). Thus, there exists a compact interval $I=[a, b]$ such that $\mu(I)$ is positive. It follows that 
\begin{equation*}
\mu\left([a, b]\right)=\liminf_q \frac{1}{q-2} \# 
\left\{ \chi\neq \chi_0 \mkern3mu \mathrel{\textsl{mod}} \mkern3mu q \ ; \ \left|\frac{L^\prime}{L}(1, \chi)\right| \in [a, b]\right\}>0.
\end{equation*}     
This completes the proof. \\

Since we can control all the moments, we have an estimation for the size of the tail of our distribution. 
\begin{cor}
\label{cor2}
For $t \geq 1$. We have
\begin{equation*}
\liminf_q\frac{1}{q-2}\# \left\{ \chi\neq \chi_0 \mkern3mu \mathrel{\textsl{mod}} \mkern3mu q \ ; \ \left|\frac{L^\prime}{L}(1, \chi)\right| \geq  t\right\}
\ll
e^{-\sqrt{t}/2}.
\end{equation*}
\end{cor}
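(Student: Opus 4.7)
The strategy is to apply Markov's inequality using the $(2k)$-th moment from Theorem~\ref{Thm1}, combined with an elementary pointwise upper bound on the $k$-fold Dirichlet convolution $\Lambda^{*k}(m):=\sum_{m=m_1\cdots m_k}\Lambda(m_1)\cdots\Lambda(m_k)$, followed by an optimization in~$k$. Setting $M_{2k}:=\sum_{m\geq 1}(\Lambda^{*k}(m))^2/m^2$, Markov's inequality and Theorem~\ref{Thm1} give, for every positive integer~$k$ and every $t>0$,
\begin{equation*}
\liminf_{q}\frac{1}{q-2}\#\Bigl\{\chi\neq\chi_0\mkern3mu\mathrel{\textsl{mod}}\mkern3mu q : \Bigl|\frac{L'}{L}(1, \chi)\Bigr|\geq t\Bigr\}\leq \frac{M_{2k}}{t^{2k}}.
\end{equation*}

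To bound $M_{2k}$ we establish the pointwise estimate $\Lambda^{*k}(n)\leq(\log n)^k$ for all $n\geq 1$. Writing $n=\prod_j p_j^{a_j}$ and grouping the convolution sum by prime content yields
\begin{equation*}
\Lambda^{*k}(n)=\sum_{\substack{1\leq b_j\leq a_j\\ \sum_j b_j=k}}\frac{k!}{\prod_j b_j!}\prod_j \binom{a_j-1}{b_j-1}(\log p_j)^{b_j}.
\end{equation*}
Using $\binom{a_j-1}{b_j-1}\leq a_j^{b_j-1}$ and the multinomial identity $\sum_{b_j\geq 0,\,\sum_j b_j=k}\prod_j(a_j\log p_j)^{b_j}/b_j!=(\log n)^k/k!$ (with $\log n=\sum_j a_j\log p_j$), we obtain $\Lambda^{*k}(n)\leq (\log n)^k/\prod_j a_j\leq(\log n)^k$. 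Consequently
\begin{equation*}
M_{2k}\leq \sum_{n\geq 2}\frac{(\log n)^{2k}}{n^2}\leq 2\,(2k)!,
\end{equation*}
the last bound via the integral comparison $\int_1^\infty (\log x)^{2k}\,x^{-2}\rd x=(2k)!$.

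Finally, we optimize in $k$. Choosing $k=\lfloor t/2\rfloor$ for $t\geq 2$ and applying Stirling's formula $(2k)!\ll\sqrt{k}\,(2k/e)^{2k}$, we obtain
\begin{equation*}
\frac{M_{2k}}{t^{2k}}\ll \sqrt{t}\Bigl(\frac{2k}{e\,t}\Bigr)^{2k}\ll \sqrt{t}\,e^{-t}\ll e^{-\sqrt{t}/2}
\end{equation*}
for $t\geq 2$; for $1\leq t<2$ the statement is trivial, since the left-hand side is always at most~$1$.

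The main technical point is the pointwise bound $\Lambda^{*k}(n)\leq(\log n)^k$: the naive termwise estimate $\Lambda(n_i)\leq\log n_i$ gives only $\Lambda^{*k}(n)\leq\log^{*k}(n)$, which can exceed $(\log n)^k$ at high prime powers. The sharper binomial inequality $\binom{a_j-1}{b_j-1}\leq a_j^{b_j-1}$ combined with the multinomial theorem is the crucial input that makes the elementary argument go through.
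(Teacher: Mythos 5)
Your proof is correct and follows the same high-level strategy as the paper: Markov's inequality applied to the $2k$-th moment from Theorem~\ref{Thm1}, a pointwise bound $\Lambda^{*k}(n)\le(\log n)^k$ to control the moment constant $M_k$, and then an optimization in~$k$. Two genuine differences are worth noting. First, your combinatorial proof of the pointwise estimate is considerably more involved than what the paper uses: this bound is exactly Lemma~\ref{lem5}, which the paper establishes by a two-line induction exploiting the identity $\sum_{d\mid m}\Lambda(d)=\log m$, namely
\begin{equation*}
\Lambda^{*k}(m)=\sum_{m_1 n=m}\Lambda(m_1)\Lambda^{*(k-1)}(n)\le(\log m)^{k-1}\sum_{m_1\mid m}\Lambda(m_1)=(\log m)^k .
\end{equation*}
Your concern that the ``naive termwise estimate'' fails is correct, but the induction route circumvents it cleanly; the binomial-plus-multinomial argument you give is valid but over-engineered (though it does yield the mildly stronger $\Lambda^{*k}(n)\le(\log n)^k/\prod_j a_j$). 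Second, and more interestingly, your choice $k=\lfloor t/2\rfloor$ is sharper than the paper's $k=\sqrt{t}$: with $2k\le t<2k+2$ one has $(2k/(et))^{2k}\le e^{-2k}\ll e^{-t}$, so you actually obtain $\sqrt{t}\,e^{-t}$, which is substantially stronger than the stated $e^{-\sqrt{t}/2}$. Since $k$ is held fixed (depending only on $t$) while $q\to\infty$, the $k$-dependence of the implied constant in Theorem~\ref{Thm1} is harmless, so your improvement is legitimate; this is notable in light of the paper's remark that it does not know whether the bound can be improved.
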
 
We do not know whether this bound can be improved upon. 
\section{Notation}
We use the following notation. 
\begin{itemize}
\item[$\bullet$] Usually $s= \sigma+it$, but for a zero of $L$-function we use $\rho$.
\item[$\bullet$] If $T\geq 1$ is a real number, $N(T, \chi, \sigma)$ denotes the number of zeros $\rho$ of the function $L(s, \chi)$ in the region $|\Im \rho|\leq T$, $\Re\rho \geq \sigma$. We define, for a primitive character $\chi$ modulo $q$ : 
\[
N(T, \chi, \sigma)=\# \left\{\rho = \beta+i\gamma ; \quad |\gamma |\leq T \ and \ \beta \geq \sigma, \, L(\rho, \chi)=0 \right\}
\]
\item[$\bullet$] The notation $\underline{m}$ and  $\underline{n}$ mean that $\prod\limits_{i=1}^{k}m_i$ and $\prod\limits_{i=1}^{k}n_i$ respectively. 
\item[$\bullet$] Let $q(\underline{m} \cdot\underline{n})$ be the largest divisor of $q$ that is prime to $\underline{m}\cdot\underline{n}$. 
\end{itemize}
\section{Auxiliary lemmas}
\begin{lem}
Let $m$ and $n$ be two positive integers. We have 
\label{lem3}
\begin{equation*}
\sum\limits_{f|q}\sum_{\chi \mkern3mu \mathrel{\textsl{mod}^\star} \mkern3mu f }\chi(m)\overline{\chi(n)}=
\begin{cases}
\displaystyle{\varphi(q(mn))} \quad  &\textrm{when $m\equiv n\left[ q(mn)\right] $}\\
  \displaystyle{0} \quad  &\textrm{otherwise,}
\end{cases}
\end{equation*}
where $\chi$ ranges the primitive characters $\mkern3mu \mathrel{\textsl{mod}}\mkern3mu  f$ and $\varphi$ is the Euler's function.  
\end{lem}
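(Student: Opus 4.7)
The plan is to reduce the double sum to a single orthogonality sum over \emph{all} Dirichlet characters modulo a suitable divisor of $q$, and then to conclude by the usual character orthogonality.

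First I would observe that, for a primitive character $\chi$ modulo $f$, the product $\chi(m)\overline{\chi(n)}$ vanishes unless $\gcd(mn,f)=1$. Combined with the outer condition $f\mid q$, this restricts the range of $f$ to the divisors of $q$ coprime to $mn$; by the definition of $q(mn)$ recalled in the notation section, these are exactly the divisors of $Q := q(mn)$. Hence the double sum in the lemma may be rewritten as
\[
\sum_{f\mid Q}\sum_{\chi\,\text{primitive mod}\,f}\chi(m)\overline{\chi(n)}.
\]

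Next, because $Q$ is coprime to both $m$ and $n$, for any $f\mid Q$ and any $\chi$ primitive modulo $f$ the values $\chi(m)$ and $\chi(n)$ coincide with the values at $m$ and $n$ of the character modulo $Q$ induced by $\chi$. Using the standard bijection
\[
\{\text{characters mod } Q\}\;\longleftrightarrow\;\bigl\{(f,\chi^\star):f\mid Q,\ \chi^\star\text{ primitive mod }f\bigr\},
\]
the above display collapses to $\sum_{\chi\bmod Q}\chi(m)\overline{\chi(n)}$. Standard orthogonality of characters modulo $Q$ then delivers $\varphi(Q)$ when $m\equiv n\pmod{Q}$ and $0$ otherwise, which is exactly the claimed identity.

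The whole argument is essentially bookkeeping; the only point requiring a little care is the identification of a primitive character modulo $f$ with its induction modulo $Q$, and this step is legitimate precisely because $q(mn)$ has been defined so that $\gcd(m,Q)=\gcd(n,Q)=1$. I do not anticipate any serious obstacle.
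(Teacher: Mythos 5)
Your argument is correct and is the standard proof of this orthogonality relation; the paper itself gives no proof and simply refers to Ramar\'e--Rumely, Section~4.3, where essentially the same argument (reduce to the conductor-stratified decomposition of all characters mod $q(mn)$, then apply ordinary orthogonality) appears. The two points you single out as needing care are indeed the only ones: that the divisors of $q$ coprime to $mn$ are precisely the divisors of $q(mn)$, and that for $\gcd(mn,Q)=1$ the induced character mod $Q$ agrees with the primitive one at $m$ and $n$; both are handled correctly.
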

\begin{proof}
See~\cite{R.R}, section~4.3.
\end{proof}
\begin{lem}
\label{toupie}
Let $M$ be an upper bound for the holomorphic function $F$ in $|s-s_0|\le R$. Assume we know of a lower bound $m>0$, for $|F(s_0)|$. Then
\begin{equation*}
\frac{F'(s)}{F(s)}=
\sum_{|\rho-s_0|\le R/2}\frac{1}{s-\rho}+
\Ocal\left(\frac{\Log (M/m)}{R}\right)
\end{equation*}
for every $s$ such that $|s-s_0|\le R/4$ and where the summation variable $\rho$ ranges the zeros $\rho$ of $F$ in the region $|\rho-s_0|\le R/2$, repeated according to multiplicity.
\end{lem}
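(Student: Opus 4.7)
The plan is the classical Borel--Carath\'eodory / Blaschke product argument used to derive explicit formulas for $L'/L$. Let $\rho_1,\dots,\rho_N$ denote the zeros of $F$ in $|s-s_0|\le R/2$, counted with multiplicity. A first step is to bound $N$: applying Jensen's formula on the disc $|s-s_0|\le R$ gives
\[
\sum_{|\rho-s_0|\le R}\Log\frac{R}{|\rho-s_0|}\le \Log(M/m),
\]
and since every zero in $|s-s_0|\le R/2$ contributes at least $\Log 2$ to the left-hand side, one obtains $N\ll \Log(M/m)$.

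Next, to each zero $\rho_j$ I associate the Blaschke-type factor
\[
B_j(s)=\frac{(R/2)(s-\rho_j)}{(R/2)^2-\overline{(\rho_j-s_0)}(s-s_0)},
\]
which vanishes simply at $\rho_j$, has $|B_j|=1$ on $|s-s_0|=R/2$, and $|B_j|\le 1$ inside. Then $h(s):=F(s)/\prod_j B_j(s)$ is holomorphic and nowhere-vanishing on $|s-s_0|\le R/2$; by the maximum modulus principle $|h|\le M$ there, while $|h(s_0)|\ge |F(s_0)|\ge m$. Fixing a branch of $\log h$ on this simply connected disc and applying Borel--Carath\'eodory (outer radius $R/2$, inner radius $R/4$) to $\log h-\log h(s_0)$ yields the bound $|h'(s)/h(s)|\ll \Log(M/m)/R$ for $|s-s_0|\le R/4$.

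The final step is to take logarithmic derivatives of $F=h\prod_j B_j$, producing
\[
\frac{F'(s)}{F(s)}=\frac{h'(s)}{h(s)}+\sum_{j=1}^N \frac{1}{s-\rho_j}+\sum_{j=1}^N \frac{\overline{\rho_j-s_0}}{(R/2)^2-\overline{(\rho_j-s_0)}(s-s_0)}.
\]
For $|s-s_0|\le R/4$ and $|\rho_j-s_0|\le R/2$ the denominator in the last sum has modulus at least $R^2/8$, so each of the $N$ terms there is $\Ocal(1/R)$; combined with $N\ll \Log(M/m)$ this extra sum is $\Ocal(\Log(M/m)/R)$ and merges, alongside $h'/h$, into the error term in the statement of the lemma. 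I do not anticipate a substantive obstacle: the only delicate point is bookkeeping the correct constants in the Borel--Carath\'eodory estimate for the nested discs of radii $R$, $R/2$, $R/4$, which is a routine computation.
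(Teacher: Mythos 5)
The paper proves this lemma only by citation (Heath--Brown \cite{B1}, Lemma~3.2, and Titchmarsh \cite{Ti}, \S3.9, Lemma~$\alpha$), so there is no in-text proof to compare against; your argument is correct and is essentially the standard one in those references: Jensen's formula to count zeros near $s_0$, a factorisation isolating those zeros, and Borel--Carath\'eodory applied to the logarithm of the remaining zero-free factor. The one stylistic divergence from the classical write-up is that Titchmarsh and Heath--Brown factor out the zeros with the plain linear factors $(s-\rho)$ rather than Blaschke factors. With linear factors one works with $g(s)=F(s)\prod_{j}(s-\rho_j)^{-1}$ and observes directly that $|g(s)/g(s_0)|=\bigl|F(s)/F(s_0)\bigr|\prod_j|s_0-\rho_j|/|s-\rho_j|\le M/m$ on $|s-s_0|=R$, so the maximum modulus principle and Borel--Carath\'eodory already give $g'/g\ll \Log(M/m)/R$ on the inner disc with no need to count the zeros at all; the extra terms in $F'/F$ are then exactly $\sum_j 1/(s-\rho_j)$ and nothing more. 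Your Blaschke variant is equally valid, but it genuinely requires the preliminary Jensen bound $N\ll\Log(M/m)$ because the factorisation produces the additional sum $\sum_j\overline{\rho_j-s_0}/\bigl((R/2)^2-\overline{(\rho_j-s_0)}(s-s_0)\bigr)$, which you correctly bound by $N\cdot\Ocal(1/R)$. Both routes land on the same estimate with comparable effort, so your proof is a sound, if slightly less economical, reconstruction of the cited lemma.
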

\begin{proof}
See\cite{B1}, Lemma~3.2 and~\cite{B2}. See also~\cite{Ti}, section~3.9 [Lemma $a$].
\end{proof}
\begin{lem}
\label{ZFR}
There is a constant $c$ such that, for any non-principal character $\chi$ modulo $q$, we have
\begin{equation*}
\frac{L'}{L}(s,\chi)\ll 
\Log \left(q|t|\right)
\end{equation*}
provided that 
\begin{equation*}
    \Re s\ge 1-\frac{c}{\Log q},\quad |t|\le q
  \end{equation*}
  except for at most one of them, which we call \emph{exceptional}, and for which we
  have 
  $$\frac{L'}{L}(s,\chi)\ll_\varepsilon q^\varepsilon$$
   in the above region.
\end{lem}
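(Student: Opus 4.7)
The plan is to combine Lemma~\ref{toupie} applied to $F=L(\cdot,\chi)$ with the classical Landau--de la Vall\'ee Poussin zero-free region for Dirichlet $L$-functions.

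First I would apply Lemma~\ref{toupie} with base point $s_0=2+it$ and a large absolute constant radius $R$ (say $R=10$), so that the point $s$ of interest lies in $|w-s_0|\le R/4$: indeed $|s-s_0|=|2-\Re s|\le 1+c/\Log q<2$. The upper bound $M$ comes from the convexity bound via the functional equation, yielding $|L(w,\chi)|\ll q^{A}$ throughout $|w-s_0|\le R$ (using $|t|\le q$). The lower bound $m\gg 1$ at $s_0$ comes from absolute convergence of the Euler product at $\Re w=2$. Hence $\Log(M/m)/R\ll \Log q$, and Lemma~\ref{toupie} yields
\[
\frac{L'}{L}(s,\chi)=\sum_{\substack{\rho:\,L(\rho,\chi)=0 \\ |\rho-s_0|\le R/2}}\frac{1}{s-\rho}+\Ocal(\Log q).
\]

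Next I would invoke the classical zero-free region: there exists $c_0>0$ such that, for every non-principal $\chi \bmod q$, $L(\cdot,\chi)$ has no zeros in the set $\{w:\Re w\ge 1-c_0/\Log(q(|\Im w|+2))\}$, with the single exception of a possible real ``Siegel'' zero $\beta_1$, which can occur for at most one exceptional character $\chi^{*}\bmod q$. One then chooses $c<c_0$ in the statement of the lemma.

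For $\chi\ne\chi^{*}$, every zero $\rho$ in the sum satisfies $\Re\rho\le 1-c_0/\Log q$, whence $|s-\rho|\ge (c_0-c)/\Log q$. Combined with the classical Hadamard-product estimate $\sum_\rho\Re(1/(s_0-\rho))\ll \Log q$ (a consequence of $L'/L(s_0,\chi)=\Ocal(1)$), one obtains $\sum_\rho 1/(s-\rho)\ll \Log(q(|t|+2))$, giving the first bound. For $\chi=\chi^{*}$, the same analysis handles every zero in the disk except possibly $\beta_1$; Siegel's (ineffective) theorem gives $1-\beta_1\gg_\varepsilon q^{-\varepsilon}$, and on the region considered one verifies $|s-\beta_1|\gg_\varepsilon q^{-\varepsilon}$, so the isolated contribution is $\Ocal_\varepsilon(q^\varepsilon)$, dominating the $\Ocal(\Log q)$ coming from the remaining zeros and the error term.

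The main obstacle is upgrading the naive term-by-term estimate, which would give $\Ocal((\Log q)^2)$ (roughly $\Log q$ zeros in the disk, each with reciprocal distance up to $\Log q$), to the sharp $\Ocal(\Log(q|t|))$. This requires the classical Landau argument exploiting the Hadamard-product expansion of $L'/L(s,\chi)-L'/L(s_0,\chi)$, whose contribution from zeros with $|\gamma-t|$ large decays like $1/|\gamma-t|^2$, together with the positivity of $\Re(1/(s_0-\rho))$ at $s_0=2+it$. The ineffectivity of the constants in the exceptional case is inherited from Siegel's theorem.
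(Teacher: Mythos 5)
The paper states Lemma~\ref{ZFR} without any proof or citation, so there is no in-paper argument to compare yours against; I can only assess the proposal on its own terms. Your outline---apply Lemma~\ref{toupie} at $s_0=2+it$ with a fixed radius, feed in the classical zero-free region for Dirichlet $L$-functions, and isolate the Siegel zero of the one exceptional character via Siegel's theorem---is the standard route and is the right one. Your treatment of the non-exceptional characters is sound in structure. One caveat on sharpness: you correctly identify that the naive count gives $\ll(\Log q)^2$ and claim that the Landau/Hadamard argument upgrades this to $\ll\Log(q|t|)$, but the textbook statement in the classical zero-free region is $L'/L(s,\chi)\ll(\Log q(|t|+2))^2$ (Iwaniec--Kowalski, Theorem~5.17). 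The positivity of $\Re(1/(s_0-\rho))$ controls the \emph{number} of nearby zeros, but it does not by itself remove a $\Log$ factor from the reciprocal distances $1/|s-\rho|$, each of which can genuinely be of size $\Log q$. Whether the extra $\Log$ can be saved here is unclear to me; fortunately the paper's application only needs the bound up to a factor $q^\varepsilon$, so an extra power of $\Log q$ in~\eqref{eq7} and~\eqref{eq9} would not alter Theorem~\ref{Thm1}.

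The genuine gap is in the exceptional case. You assert that throughout $\Re s\ge 1-c/\Log q$, $|t|\le q$ one has $|s-\beta_1|\gg_\varepsilon q^{-\varepsilon}$. This is false: by definition a Siegel zero $\beta_1$ lies to the right of $1-c_0/\Log q$, hence strictly inside the stated region once $c$ is small, and taking $s$ real and close to $\beta_1$ makes $|s-\beta_1|$ arbitrarily small---indeed $L'/L(s,\chi^\ast)$ has a pole there, so no polynomial bound can hold on that region. The correct assertion is that $L'/L(s,\chi^\ast)\ll_\varepsilon q^\varepsilon$ holds only on the smaller domain $\Re s\ge 1-c_\varepsilon q^{-\varepsilon}$, $|t|\le q$, with $c_\varepsilon$ an ineffective constant coming from Siegel's theorem; on that smaller domain your estimate $|s-\beta_1|\gg_\varepsilon q^{-\varepsilon}$ does go through. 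To be fair, the lemma's phrase ``in the above region'' invites exactly this misreading, and when the paper actually invokes the exceptional bound in case~(ii) of the proof of Theorem~\ref{Thm1} it silently restricts to the line $\Re s = 1-c/q^\varepsilon$, $|\Im s|\le 1$, which is the correct smaller domain. Your proof should make this restriction explicit for the exceptional character rather than claiming the bound on the full region.
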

\begin{lem}
  \label{density}
We have, when $\sigma\ge 4/5$ and for any $\varepsilon>0$,
  \begin{equation*}
    \sum_{f|q}\sum_{\mkern3mu \mathrel{\chi \textsl{mod}^\star} \mkern3mu f}N(T,\chi,\sigma)
    \ll_\varepsilon (qT)^{2(1-\sigma)+\varepsilon},
  \end{equation*}
  where $\chi$ ranges the primitive characters $\mkern3mu \mathrel{\textsl{mod}}\mkern3mu  f$.
\end{lem}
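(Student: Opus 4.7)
The plan is to follow the classical mollifier method of Montgomery for zero-density estimates, in the sharpening due to Jutila which achieves the density-hypothesis exponent $2(1-\sigma)+\varepsilon$ in the range $\sigma \ge 11/14$, a range comfortably containing $\sigma \ge 4/5$.

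\medskip

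The first step introduces a mollifier $M_Y(s,\chi) = \sum_{n\le Y}\mu(n)\chi(n) n^{-s}$; for $\mathrm{Re}\,s > 1$ one has $L(s,\chi)M_Y(s,\chi) = 1 + \sum_{n>Y} a(n)\chi(n) n^{-s}$ with $|a(n)| \le d(n)$. Multiplying by a Mellin-type smoothing kernel $\Gamma(w)X^{w}$ and shifting contours across a zero $\rho=\beta+i\gamma$ with $\beta\ge\sigma$ and $|\gamma|\le T$, we obtain a detector inequality of the form
\[
1 \;\ll\; |D_1(\rho,\chi)|^2 + |D_2(\rho,\chi)|^2,
\]
where $D_1$ is a smooth Dirichlet polynomial of length at most some power $X$ of $qT$, and $D_2$ comes from the segment of the contour crossing the critical line. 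The key reflection step uses the functional equation of $L(s,\chi)$ to rewrite $D_2$ as a Dirichlet polynomial of length only $(qT)^{1-\sigma}$, not the naive length $(qT)^{\sigma}$.

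\medskip

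The second step is to pass from a sum over zeros to a mean square of the detectors on a vertical line: using Gallagher's lemma together with the standard well-spacing estimate (at most $O(\log qT)$ zeros in any box of unit height), we convert $\sum_{\rho}|D(\rho,\chi)|^2$ into an integral over $t\in[-T,T]$. I would then apply the hybrid large sieve averaged over primitive characters modulo divisors of $q$,
\[
\sum_{f\mid q}\;\sum_{\chi \,\textrm{mod}^{\star} f}\int_{-T}^{T}\Bigl|\sum_{N<n\le 2N}c_n\chi(n)n^{-it}\Bigr|^2 dt
\;\ll\; (qT + N)\sum_{N<n\le 2N}|c_n|^2,
\]
to each dyadic block of $D_1$ and of the reflected $D_2$. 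With $|c_n|\ll d(n)\,n^{-\sigma}$, each dyadic block contributes $\ll (qT+N)\,N^{1-2\sigma}(\log qT)^{O(1)}$ to the total zero count.

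\medskip

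The final step is to balance the parameters: take $X\asymp qT$ and $Y$ of order $(qT)^{1-\sigma}$. The contribution from $D_1$ sums to $\ll(qT)^{2(1-\sigma)}$, and the contribution from the reflected $D_2$ is of the same order, yielding the announced bound $(qT)^{2(1-\sigma)+\varepsilon}$ after summing dyadically and absorbing logarithmic losses. The main obstacle is the reflection step: without invoking the functional equation, the large sieve only produces Montgomery's exponent $3(1-\sigma)/(2-\sigma)$, which at $\sigma=4/5$ equals $1/2$ and therefore falls short of the required $2/5$. Jutila's reflection argument, shortening the troublesome detector $D_2$ from length $(qT)^{\sigma}$ to $(qT)^{1-\sigma}$, is thus the essential ingredient for reaching the density-hypothesis bound throughout the range $\sigma\ge 4/5$.
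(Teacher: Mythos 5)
The paper does not supply a proof of Lemma~\ref{density} at all: it simply points the reader to Huxley and Jutila, \emph{Large values of Dirichlet polynomials IV} (Acta Arith.\ 32 (1977), 297--312), so there is nothing in the paper to compare your sketch against step by step. Judged on its own merits, your outline correctly identifies the broad framework (mollified zero detector, contour shift, functional-equation reflection to shorten the critical-line contribution) but the stated parameter balance does not in fact yield the density-hypothesis exponent, and this is not merely a cosmetic slip.

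Concretely: the dyadic blocks of $D_1$ run from $N\asymp Y$ up to $N\asymp X$, and your hybrid large-sieve bound contributes $(qT+N)N^{1-2\sigma+\varepsilon}$ per block. The $qT\cdot N^{1-2\sigma}$ part is dominated by the smallest block $N\asymp Y$, so with $Y=(qT)^{1-\sigma}$ you obtain
$qT\cdot Y^{1-2\sigma}=(qT)^{\,2-3\sigma+2\sigma^{2}}$;
at $\sigma=4/5$ this exponent is $22/25=0.88$, whereas the target is $2(1-\sigma)=2/5$. To tame this term you would need $Y\gg qT$, but then the mollifier is so long that the reflected $D_2$ polynomial (of length roughly $Y\sqrt{qT}$) blows up under the same $L^2$ bound. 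This is a genuine obstruction: a pure $L^2$/Gallagher mean-value treatment cannot reach the density hypothesis for $\sigma\ge 4/5$, no matter how the mollifier length is tuned. The title of the cited reference is the give-away: Huxley--Jutila rely on \emph{large-value estimates} for Dirichlet polynomials (Hal\'asz--Montgomery type, exploiting the well-spacing of zeros directly) rather than converting the zero sum to an integral via Gallagher's lemma and applying the $L^2$ large sieve, and it is precisely this extra input, together with the reflection device, that pushes the exponent down to $2(1-\sigma)$ in the range needed. Your sketch, as written, would therefore not close; the missing idea is the large-value inequality, not just the reflection trick.
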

\begin{proof}
A proof of this Lemma can be found in~\cite{H.J}.
\end{proof}
Our main theorems will follow from the following two lemmas under the hypothesis $q$ is prime, otherwise, the combination will be complicated .
\begin{lem}
\label{lem4}
Let $m_i$, $n_i$ and $k$ be the positive integers for $i \in \{1, 2, \cdots, k\}$ and let $q$ be a prime number. Then, we have
\begin{multline*}
\sum_{\substack{m_1, \cdots,m_k, n_1, \cdots n_k\geq 1 \\ \underline{m} \equiv \underline{n} \mkern3mu \mathrel{\textsl{mod}}\mkern3mu\left[ {q\left(\underline{m} \cdot\underline{n}\right)}\right]}}\varphi\left(q(\underline{m} \cdot\underline{n})\right) \frac{\prod\limits_{i=1}^{k}\Lambda(m_i) \Lambda(n_i)}{\underline{m} \cdot\underline{n}} e^{-\underline{m} \cdot\underline{n}/X}
=\\
 (q-2)\sum_{m\geq 1}  \frac{\left(\sum\limits_{m=m_1\cdot m_2 \cdots m_k}\Lambda(m_1)\cdots \Lambda(m_k)\right)^2}{m^2}
 + 
 \Ocal_{\varepsilon}\left(qe^{-q/X}+\frac{q}{X^{1-\varepsilon}}
 \right),
\end{multline*}
where $\varphi$ is the Euler's function.  
\end{lem}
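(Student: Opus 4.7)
I would split the sum according to the two possible values of $q(\underline m\cdot\underline n)$: since $q$ is prime, this quantity equals $q$ when $q\nmid\underline m\cdot\underline n$ (so $\varphi=q-1$ and the congruence becomes $\underline m\equiv\underline n\pmod q$), and equals $1$ otherwise (with a vacuous congruence). Collapsing the $k$-fold inner sum via $a(m):=\sum_{m_1\cdots m_k=m}\prod_i\Lambda(m_i)$, the left-hand side becomes
\begin{equation*}
S=(q-1)\!\!\!\!\sum_{\substack{q\nmid mn\\ m\equiv n\,(q)}}\!\!\frac{a(m)\,a(n)}{mn}\,e^{-mn/X}+\sum_{q\mid mn}\frac{a(m)\,a(n)}{mn}\,e^{-mn/X}.
\end{equation*}

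The bulk of the argument then separates the diagonal $m=n$ from the off-diagonal $m\ne n$. For the diagonal, combining the two pieces yields
\begin{equation*}
D=(q-1)\sum_{m\ge 1}\frac{a(m)^2}{m^2}\,e^{-m^2/X}-(q-2)\sum_{q\mid m}\frac{a(m)^2}{m^2}\,e^{-m^2/X}.
\end{equation*}
I would identify the announced main term by writing $(q-1)=(q-2)+1$ and comparing with $(q-2)\sum_m a(m)^2/m^2$. The smoothing discrepancy $(q-2)\sum_m(a(m)^2/m^2)(1-e^{-m^2/X})$ I would control via $1-e^{-t}\le\min(t,1)$, a dyadic split at $m\asymp\sqrt X$, the standard pointwise bound $a(m)\ll_\varepsilon m^\varepsilon$, and partial summation against average-order estimates on $\sum_{m\le Y}a(m)^2$; this step is expected to contribute $\ll q/X^{1-\varepsilon}$. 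The correction $\sum_{q\mid m}(a(m)^2/m^2)\,e^{-m^2/X}$ rests on the key observation that if $a(m)\ne 0$ and $q\mid m$, then the prime $q$ must occur among the $k$ prime powers composing $m$, forcing $m\ge q$; hence $e^{-m^2/X}\le e^{-q/X}$ and this correction is absorbed into $O(qe^{-q/X})$.

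The off-diagonal contribution is the subtle part. In case (i), $q\nmid mn$ with $m\equiv n\pmod q$ and $m\ne n$ forces $|m-n|\ge q$; parametrising $m=n+jq$ with $j\ge 1$ and symmetrising, I would split $e^{-n(n+jq)/X}=e^{-n^2/X}e^{-njq/X}$, apply Cauchy--Schwarz in $j$ using the uniform bound $\sum_{j\ge 1}a(n+jq)^2/(n+jq)^2\ll 1$, and evaluate the geometric $\sum_{j\ge 1}e^{-2njq/X}\asymp X/(nq)$ for $nq\le X$; the resulting expression is dominated by $qe^{-q/X}+q/X^{1-\varepsilon}$ in the regime $X\le q^{O(1)}$ relevant for the deduction of Theorem~\ref{Thm1}. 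Case (ii) ($q\mid mn$ with $m\ne n$) uses the same $mn\ge q$ observation, yielding an even stronger $e^{-q/X}$ saving. The main obstacle is precisely this off-diagonal step: one must convert the arithmetic-progression condition into a genuine $L^2$-control on $a$ along the progression $\{n+jq\}$ without losing the $(q-1)=\varphi(q)$ prefactor, and balance the three exponential factors ($e^{-n^2/X}$, $e^{-njq/X}$, and the implicit $e^{-q/X}$) so that the resulting bound meshes with the diagonal smoothing error.
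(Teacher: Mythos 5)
Your proof follows essentially the same route as the paper: the identical four-way split on $q\nmid\underline m\cdot\underline n$ versus $q\mid\underline m\cdot\underline n$ crossed with diagonal $\underline m=\underline n$ versus off-diagonal, the main term extracted from the diagonal coprime piece, and the remaining three contributions dominated using the pointwise bound $a(m)\ll_\varepsilon m^\varepsilon$ together with the observation that the conditions force either $|\underline m-\underline n|\ge q$ or $\underline m\cdot\underline n\ge q$, producing an $e^{-q/X}$ factor. Where you diverge is in emphasis and technique for the off-diagonal coprime case, which you flag as the ``subtle part'' and propose to handle by Cauchy--Schwarz in the progression parameter $j$ and a geometric-series evaluation. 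The paper does not need any of this: bounding \emph{both} $a(\underline m)$ and $a(\underline n)$ pointwise by $m^\varepsilon$, $n^\varepsilon$, symmetrising to $\underline m<\underline n=\underline m+kq$, and using $e^{-\underline m(\underline m+kq)/X}\le e^{-kq/X}$ together with $\sum_m m^{-2+2\varepsilon}\ll1$ gives
\[
A_q(X)\ll_\varepsilon\varphi(q)\sum_{k\ge1}e^{-kq/X}\ll\varphi(q)\,e^{-q/X}
\]
in one line. Your Cauchy--Schwarz step introduces the extraneous $\bigl(X/(nq)\bigr)^{1/2}$ in the $nq\le X$ range, which is only harmless because that range is empty when $X\le q$ (true for the application $X=q^{1-\varepsilon}$); the direct bound avoids having to notice this. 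Likewise your diagonal smoothing treatment (dyadic split, $1-e^{-t}\le\min(t,1)$, partial summation against an average-order estimate of $\sum_{m\le Y}a(m)^2$) is heavier machinery than required: the paper splits at $m=X^{1/2}$, uses $e^{-m^2/X}=1+\Ocal(m^2/X)$ on the short range, and controls both ranges with the same pointwise $a(m)\ll_\varepsilon m^\varepsilon$. In short, the skeleton is the same; your additions are correct but add complexity the pointwise divisor bound already absorbs. One small point in your favour: you keep explicit the $(q-2)\sum_{q\mid m}a(m)^2/m^2e^{-m^2/X}$ correction that arises when passing from $\varphi(q)=q-1$ restricted to $q\nmid m$ to the unrestricted $q-2$ coefficient in the statement, whereas the paper silently drops the coprimality constraint at the end.
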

\begin{proof}
We study this summation 
\begin{equation}
\label{eq15}
F_q(X)
=
\sum_{\substack{m_1, \cdots,m_k, n_1, \cdots n_k\geq 1 \\ \underline{m} \equiv \underline{n} \mkern3mu \mathrel{\textsl{mod}}\mkern3mu \left[ {q\left(\underline{m} \cdot\underline{n}\right)}\right]}}\varphi\left(q(\underline{m} \cdot\underline{n})\right) \frac{\prod\limits_{i=1}^{k}\Lambda(m_i) \Lambda(n_i)}{\underline{m} \cdot\underline{n}} e^{-\underline{m} \cdot\underline{n}/X}. 
\end{equation}
We split the domain defined by the condition $\underline{m} \equiv \underline{n} \mkern3mu \mathrel{\textsl{mod}}\mkern3mu \left[ {q(\underline{m} \cdot\underline{n})}\right]$ in fourth parts.
\begin{itemize}
\item[$\bullet$] The first case is when $(q,\underline{m} \cdot\underline{n})= 1$ and $\underline{m} \neq \underline{n}$. It follows that $q(\underline{m} \cdot\underline{n})=q$, and thus $\underline{m} \equiv \underline{n}\mkern3mu \mathrel{\textsl{mod}}\mkern3mu [q]$. We get the contribution
\begin{eqnarray*}
A_q(X) &=& \sum_{\substack{m_1, \cdots,m_k, n_1, \cdots n_k\geq 1 \\ \underline{m} \equiv \underline{n} \mkern3mu \mathrel{\textsl{mod}}\mkern3mu [q]\\ \underline{m} \neq \underline{n} }}\varphi(q(\underline{m} \cdot\underline{n})) \prod\limits_{i=1}^{k}\Lambda(m_i) \Lambda(n_i) \frac{e^{-\underline{m} \cdot\underline{n}/X}}{\underline{m} \cdot\underline{n}}
\\&= & 
 \varphi (q) \sum_{\substack{\underline{m}, \underline{n} \geq 1\\ \underline{m} \equiv \underline{n} \mkern3mu \mathrel{\textsl{mod}}\mkern3mu [q]\\ \underline{m} \neq \underline{n}}} \left(\sum_{\underline{m}=\prod\limits_{i=1}^{k} m_i} \Lambda(m_1)\cdots \Lambda(m_k) \sum_{\underline{n}=\prod\limits_{i=1}^{k} n_i}\Lambda(n_1)\cdots \Lambda(n_k)\right)  \frac{e^{-\underline{m}\cdot\underline{n}/X}}{\underline{m}\cdot\underline{n}}. 
 \end{eqnarray*}
 By using the fact that 
 \begin{equation*}
 \sum_{\underline{m}=\prod\limits_{i=1}^{k} m_i} \Lambda(m_1)\cdots \Lambda(m_k) \sum_{\underline{n}=\prod\limits_{i=1}^{k} n_i}\Lambda(n_1)\cdots \Lambda(n_k)\ll_{\varepsilon} \underline{m}^{\varepsilon}\underline{n}^{\varepsilon}, 
 \end{equation*}
 we get
 \begin{eqnarray*}
 A_q(X) 
& \ll_{\varepsilon} & 
\varphi (q) \sum_{\substack{ \underline{m}, \underline{n} \geq 1\\ \underline{m} \equiv \underline{n} \mkern3mu \mathrel{\textsl{mod}}\mkern3mu [q]\\ \underline{m} < \underline{n}}} \frac{e^{-\underline{m}\cdot\underline{n}/X}}{\underline{m}^{1-\varepsilon}\underline{n}^{1-\varepsilon}}
\\& \ll_{\varepsilon} & 
\varphi (q) \sum_{\underline{m}\geq 1}\sum_{\substack{ k\geq 1\\ \underline{n}=\underline{m}+kq}} \frac{e^{-\underline{m}\cdot\underline{n}/X}}{\underline{m}^{1-\varepsilon}\underline{n}^{1-\varepsilon}}
=
\varphi (q) \sum_{\underline{m}\geq 1}\sum_{ k\geq 1} \frac{e^{-\underline{m}(\underline{m}+kq)/X}}{\underline{m}^{1-\varepsilon}\left(\underline{m}+kq\right)^{1-\varepsilon}}
\\& \ll_{\varepsilon} &  
\varphi (q)\sum_{ k\geq 1} e^{-kq/X}
 \ll_{\varepsilon}  
\varphi (q)e^{-q/X}.
\end{eqnarray*}
Therefore, we have 
\begin{equation}
\label{eq1}
\boxed{A_q(X)
\ll_\varepsilon
q e^{-q/X}.}
\end{equation}
\item[$\bullet$] The second case is when $(q,\underline{m} \cdot\underline{n})= 1$ and $\underline{m} = \underline{n}$, it follows that $q(\underline{m})=q$. We get the contribution
\begin{equation*}
B_q(X) = \sum_{\substack{m_1, \cdots,m_k, n_1, \cdots n_k\geq 1 \\ (q,\underline{m}\cdot \underline{n})=1, \ \underline{m}= \underline{n}}} \varphi(q(\underline{m} \cdot\underline{n})) \prod\limits_{i=1}^{k}\Lambda(m_i) \Lambda(n_i) \frac{e^{-\underline{m} \cdot\underline{n}/X}}{\underline{m} \cdot\underline{n}}.
\end{equation*}
Let us write the function $B_q(X)$ as follows 
\begin{equation*}
B_q(X)= B^{\sharp}_q(X)+B^{\flat}_q(X), 
\end{equation*}
where 
\begin{equation*}
B^{\sharp}_q(X)= \sum_{\substack{m_1, \cdots,m_k, n_1, \cdots n_k\geq 1 \\ (q,\underline{m}\cdot \underline{n})=1, \ \underline{m}= \underline{n} \\ \underline{m} \leq X^{1/2} }} \varphi\left(q( \underline{m})\right)\prod\limits_{i=1}^{k}\Lambda(m_i) \Lambda(n_i) \frac{e^{-{\underline{m}}^2/X}}{\underline{m}^2}, 
\end{equation*}
and 
\begin{equation*}
B^{\flat}_q(X)= \sum_{\substack{m_1, \cdots,m_k, n_1, \cdots n_k\geq 1 \\ (q,\underline{m}\cdot \underline{n})=1, \ \underline{m}= \underline{n}\\ \underline{m} > X^{1/2} }} \varphi\left(q( \underline{m})\right)\prod\limits_{i=1}^{k}\Lambda(m_i) \Lambda(n_i) \frac{e^{-{\underline{m}}^2/X}}{\underline{m}^2}. 
\end{equation*}
\begin{itemize}
\item[(A)] \textbf{For the function $B^{\flat}_q(X)$.} Since $\underline{m}>X^{1/2}$, it follows that  $e^{-{\underline{m}}^2/X} \leq 1$. Thus, we get 
\begin{equation*}
B^{\flat}_q(X)
\ll 
\varphi(q)\sum_{\substack{m_1, \cdots,m_k, n_1, \cdots n_k\geq 1 \\
 (q,\underline{m}\cdot \underline{n})=1, \ \underline{m}= \underline{n}\\ \underline{m} > X^{1/2} }}  \frac{\prod\limits_{i=1}^{k}\Lambda(m_i) \Lambda(n_i)}{\underline{m}^2}. 
\end{equation*}
Using again the fact that $\prod\limits_{i=1}^{k}\Lambda(m_i) \Lambda(n_i)\ll_{\varepsilon} \underline{m}^{\varepsilon}\cdot \underline{n}^{\varepsilon} \ll_{\varepsilon} \underline{m}^{2\varepsilon}$. We have
\begin{equation}
\label{eq2}
B^{\flat}_q(X)
\ll_{\varepsilon} 
\frac{q}{X^{1-\varepsilon}}.
\end{equation}
\item[(B)] \textbf{For the function $B^{\sharp}_q(X)$.} Since $ \underline{m}^2$ is small enough, we can rely on the approximation 
\begin{equation*}
e^{-\underline{m}^2/X}=1+\Ocal{\left(\frac{\underline{m}^2}{X}\right)},
\end{equation*}
which gives us
\begin{multline*}
B^{\sharp}_q(X)
=
\varphi(q)\sum_{\substack{m_1, \cdots,m_k, n_1, \cdots n_k\geq 1 \\(q,\underline{m}\cdot \underline{n})=1,\ \underline{m} =\underline{n} \\ \underline{m} \leq X^{1/2} }}   \frac{\prod\limits_{i=1}^{k}\Lambda(m_i) \Lambda(n_i)}{\underline{m}^2}
\\+ \Ocal\left(\frac{\varphi(q)}{X}\sum_{\substack{m_1, \cdots,m_k, n_1, \cdots n_k\geq 1 \\ (q,\underline{m}\cdot \underline{n})=1,\ \underline{m}= \underline{n} \\ \underline{m} \leq X^{1/2} }}  \prod\limits_{i=1}^{k}\Lambda(m_i) \Lambda(n_i)\right). 
\end{multline*}
By a similar argument as above, we get
\begin{equation*}
B^{\sharp}_q(X)
=
\varphi(q)\sum_{\substack{m_1, \cdots,m_k, n_1, \cdots n_k\geq 1 \\ (q,\underline{m}\cdot \underline{n})=1,\ 
 \underline{m}= \underline{n} \\  \underline{m} \leq X^{1/2} }}   \frac{\prod\limits_{i=1}^{k}\Lambda(m_i) \Lambda(n_i)}{\underline{m}^2}+ \Ocal_{\varepsilon}\left(\frac{\varphi(q)}{X^{1-\varepsilon}}\right).
\end{equation*}
Thus, we have
 \begin{equation}
 \label{eq3}
 B^{\sharp}_q(X)
 =
 (q-2)\sum_{\substack{m_1, \cdots,m_k, n_1, \cdots n_k\geq 1 \\ (q,\underline{m}\cdot \underline{n})=1,\ 
  \underline{m}= \underline{n}}}  \frac{\prod\limits_{i=1}^{k}\Lambda(m_i) \Lambda(n_i)}{\underline{m}^2}+ \Ocal_{\varepsilon}\left(\frac{q}{X^{1-\varepsilon}} \right).
 \end{equation}
 \end{itemize}
From Eq~\eqref{eq2} and~\eqref{eq3}, we find that 
\begin{equation}
\label{eq4}
\boxed{B_q(X)= (q-2)\sum_{\substack{m_1, \cdots,m_k, n_1, \cdots n_k\geq 1 \\ (q,\underline{m}\cdot \underline{n})=1,\ \underline{m}= \underline{n} }} \frac{\prod\limits_{i=1}^{k}\Lambda(m_i) \Lambda(n_i)}{\underline{m}^2}+ \Ocal_{\varepsilon}\left(\frac{q}{X^{1-\varepsilon}}\right).}
\end{equation}
\item[$\bullet$] The third case is when $(q,\underline{m} \cdot\underline{n})\neq 1$ and $\underline{m} \neq \underline{n}$. It follows that $q(\underline{m} \cdot\underline{n})=1$ and thus $\varphi\left(q(\underline{m} \cdot\underline{n})\right)=1$. We get the contribution 
\begin{eqnarray*}
C_q(X) 
&=&
\sum_{\substack{m_1, \cdots,m_k, n_1, \cdots n_k\geq 1 \\ q|\underline{m} \cdot\underline{n}, \ \underline{m} \neq \underline{n} }}\varphi\left(q(\underline{m} \cdot\underline{n})\right) \frac{\prod\limits_{i=1}^{k}\Lambda(m_i) \Lambda(n_i)}{\underline{m} \cdot\underline{n}} \ e^{-\underline{m} \cdot\underline{n}/X}
\\&= & 
 \sum_{\substack{\ell \geq 1 \\ q|\ell}} \sum_{\ell = \underline{m} \cdot\underline{n}} \prod\limits_{i=1}^{k}\Lambda(m_i) \Lambda(n_i)  \frac{e^{-\ell/X}}{\ell }. 
\end{eqnarray*}
We notice that 
\begin{equation*}
\sum_{\ell = \underline{m} \cdot\underline{n}}\prod\limits_{i=1}^{k}\Lambda(m_i) \Lambda(n_i) \ll_{\varepsilon} \ell^{\varepsilon}.
\end{equation*}
Thus, we have
\begin{equation*}
C_q(X) 
\ll_{\varepsilon} 
\sum_{\substack{\ell \geq q \\ q|\ell}}\frac{e^{-\ell/X}}{\ell^{1-\varepsilon} }.
\end{equation*}
Since $q|\ell$, we write $\ell =qu$. Then
\begin{equation}
\label{eq5}
\boxed{C_q(X) 
\ll_{\varepsilon} 
\frac{1}{q^{1-\varepsilon}}\sum_{u \geq 1}\frac{e^{-qu/X}}{u^{1-\varepsilon} }
\ll_{\varepsilon} 
\frac{e^{-q/X}}{q^{1-\varepsilon}}.} 
\end{equation}
\item[$\bullet$] The fourth case is when $(q,\underline{m} \cdot\underline{n})\neq 1$ and $\underline{m} = \underline{n}$. It follows that $q(\underline{m})=1$. We get the contribution 
\begin{eqnarray*}
D_q(X) 
&=&
\sum_{\substack{m_1, \cdots,m_k, n_1, \cdots n_k\geq 1 \\ q| \underline{m}, \ \underline{m} = \underline{n} }}\varphi\left(q(\underline{m} \cdot\underline{n})\right) \prod\limits_{i=1}^{k}\Lambda(m_i) \Lambda(n_i) \frac{ e^{-\underline{m} \cdot\underline{n}/X}}{\underline{m} \cdot\underline{n}}
\\&= & 
 \sum_{\substack{m_1, \cdots,m_k, n_1, \cdots n_k \geq 1 \\ q|\underline{m}, \ \underline{m} = \underline{n}}} \prod\limits_{i=1}^{k}\Lambda(m_i) \Lambda(n_i)  \frac{e^{-\underline{m}^2/X}}{\underline{m}^2}. 
\end{eqnarray*}
It follows that
\begin{eqnarray*}
D_q(X)
&=&\sum_{\substack{\underline{m}, \underline{n} \geq 1 \\ q| \underline{m}, \ \underline{m} = \underline{n} }}\left(\sum_{\underline{m}=\prod\limits_{i=1}^{k}m_i}\Lambda(m_1)\cdots\Lambda(m_k)\sum_{\underline{n}=\prod\limits_{i=1}^{k}n_i}\Lambda(n_1)\cdots \Lambda(n_k)\right)\frac{ e^{-\underline{m}^2/X}}{\underline{m}^2}
\\&\ll_{\varepsilon}&
\sum_{\substack{\underline{m} \geq 1 \\ q| \underline{m} }} \frac{ e^{-\underline{m}^2/X}}{\underline{m}^{2(1-\varepsilon)}}= \sum\limits_{u\geq 1}\frac{e^{-(qu)^2/X}}{(qu)^{2(1-\varepsilon)}}
\ll_{\varepsilon}
\frac{e^{-q^2/X}}{q^{2(1-\varepsilon)}}.
\end{eqnarray*}
Therefore, we have 
\begin{equation}
\label{eq6}
\boxed{D_q(X)
\ll_{\varepsilon}
\frac{e^{-q^2/X}}{q^{2(1-\varepsilon)}}.}
\end{equation}
From Eq~\eqref{eq1}, \eqref{eq4}, \eqref{eq5} and \eqref{eq6}, we conclude that
\begin{equation*}
F_q(X)= (q-2)\sum_{\substack{m_1, \cdots,m_k, n_1, \cdots n_k\geq 1 \\ \left(q,  \underline{m}\cdot\underline{n}\right)=1, \
 \underline{m}= \underline{n} }}  \frac{\prod\limits_{i=1}^{k}\Lambda(m_i) \Lambda(n_i)}{\underline{m}^2}
 + \Ocal_{\varepsilon}\left(qe^{-q/X}+\frac{q}{X^{1-\varepsilon}}\right).
\end{equation*}
\end{itemize}
This completes the proof.
\end{proof}
\begin{lem}
\label{lem5}
For any integer number $k\geq 1$, we have
\begin{equation}
\label{eq11}
\sum_{m_1\cdot m_2 \cdots m_k=m}\Lambda(m_1)\cdots \Lambda(m_k)\leq (\log m)^k
\end{equation}
\end{lem}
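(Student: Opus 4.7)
The plan is a straightforward induction on $k$, whose only non-trivial ingredient is the classical identity $\log m = \sum_{d \mid m} \Lambda(d)$. For the base case $k = 1$, the inequality reduces to $\Lambda(m) \le \log m$: either $m = p^a$ is a prime power, in which case $\Lambda(m) = \log p \le a\log p = \log m$, or $\Lambda(m) = 0 \le \log m$.

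For the inductive step, I would group the $k$-fold sum according to the first factor $m_1 = d$, writing
\[
\sum_{m_1 \cdots m_k = m} \Lambda(m_1) \cdots \Lambda(m_k)
=
\sum_{d \mid m} \Lambda(d) \sum_{m_2 \cdots m_k = m/d} \Lambda(m_2) \cdots \Lambda(m_k).
\]
The inductive hypothesis applied to the inner sum yields the upper bound $(\log(m/d))^{k-1}$, which I would then crudely replace by $(\log m)^{k-1}$ using $m/d \le m$. Pulling this factor out of the outer sum and applying $\log m = \sum_{d \mid m} \Lambda(d)$ produces $(\log m)^k$, closing the induction.

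There is no real obstacle here: the argument hinges entirely on the monotonicity $\log(m/d) \le \log m$ together with the standard divisor identity for $\Lambda$, and the boundary case $m = 1$ is trivial since both sides vanish. The only noteworthy point is that all the slack in the final bound comes from the step $\log(m/d) \le \log m$, so one should not expect to sharpen Lemma~\ref{lem5} by refining this proof strategy.
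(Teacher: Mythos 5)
Your proof is correct and is essentially the same as the paper's: both arguments peel off the $m_1$-variable, invoke the inductive hypothesis to bound the inner sum over $m_2\cdots m_k=m/m_1$ by $(\log(m/m_1))^{k-1}\le(\log m)^{k-1}$, and then close with the identity $\sum_{d\mid m}\Lambda(d)=\log m$. The only cosmetic difference is that you spell out the base case $\Lambda(m)\le\log m$ explicitly, whereas the paper verifies $k=2$ separately before the induction; the mechanism is identical.
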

\begin{proof}
We prove this lemma by induction on $k$. For $k=1$ is easily. In order to show that Eq~\eqref{eq11} is valid for $k=2$, we write
\begin{equation*}
\sum_{m_1 m_2 = m}\Lambda(m_1)\Lambda(m_2)
\le \log m \sum_{m_1 m_2 = m}\Lambda(m_2) \leq (\log m)^2
\end{equation*}
Now, we assume that Eq~\eqref{eq11} is valid for any fixed and non-negative integer $k-1$. Then we have to prove that it is also valid for $k$. By induction hypothesis, we have   
\begin{eqnarray*}
\sum_{m_1\cdot m_2 \cdots m_k =m}\Lambda(m_1)\cdots \Lambda(m_k)
&=& 
\sum_{m_1n=m}\Lambda(m_1)\sum_{m_2\cdot m_3 \cdots m_k=n}\Lambda(m_2)\cdots \Lambda(m_k)
\\&\leq &
\sum_{m_1n=m}\Lambda(m_1) \log^{k-1}n \leq (\log m)^k
\end{eqnarray*}
We conclude from the above that Eq~\eqref{eq11} is valid for $k$. Then it is valid for all $k\geq 1$. The lemma is proved.
\end{proof}
\section{Proof of Theorem~\ref{Thm1}}
For $q$ is prime, we consider the function 
\begin{equation*}
G_q(s)=\sum_{\substack{\chi \mkern3mu \mathrel{\textsl{mod}} \mkern3mu q\\ \chi \neq \chi_0}}\left(\frac{L^{\prime}}{L}(s, \chi)\right)^k\left(\frac{L^\prime}{L}(s, \bar{\chi})\right)^k
\end{equation*}
where $\chi$ ranges the non-principle primitive characters modulo $q$. When $\Re s>1$, the series converges absolutely. Applying the following definition of $L^\prime/L$ 
\begin{equation*}
\frac{L^\prime}{L}(s, \chi)=\sum_{n\geq 1}\frac{\chi(n)\Lambda(n)}{n^s},
\end{equation*}
we write the function $G_q(s)$ as 
\begin{equation*}
G_q(s)=
\sum_{\substack{\chi \mkern3mu \mathrel{\textsl{mod}} \mkern3mu q\\ \chi \neq \chi_0}}\sum_{\substack{m_1\cdots m_k \geq 1 \\n_1\cdots n_k \geq 1
 }}\frac{\prod\limits_{i=1}^{k} \Lambda(m_i)\chi(m_i) \prod\limits_{i=1}^{k} \Lambda(n_i)\bar{\chi}(n_i)}{\left(\prod\limits_{i=1}^{k} m_in_i\right)^s}.
\end{equation*}
The proof relies on two distinct evaluations of the quantity:
\begin{equation}
\label{eq16}
S_q(X)=\frac{1}{2}\int\limits_{2-i\infty}^{2+i\infty}G_q(s)X^{s-1}\Gamma(s-1)\, ds. 
\end{equation}
\begin{enumerate}
\item
\textbf{The first evaluation} relies on the formula $e^{-y}=\frac{1}{2i\pi} \int_{2-i\infty}^{2+i\infty}y^{-s}\Gamma(s)\, ds$ (valid for positive $y$) and on using Lemma~\ref{lem3}. We readily find that
\begin{equation*}
S_q(X)=\sum_{\substack{ m_1,\cdots, m_k, n_1, \cdots, n_k\geq 1 \\ \underline{m} \equiv \underline{n} \mkern3mu \mathrel{\textsl{mod}}\mkern3mu\left[ {q(\underline{m} \cdot\underline{n})}\right] }}\varphi\left(q(\underline{m} \cdot\underline{n})\right)\frac{\prod\limits_{i=1}^{k}\Lambda(m_i)\Lambda(n_i)}{\underline{m} \cdot\underline{n}}e^{-\underline{m} \cdot\underline{n}/X}.
\end{equation*}
Thanks to Lemma~\ref{lem4}, we get 
\begin{equation}
\label{eq25}
\boxed{S_q(X)
=
(q-2)\sum_{m\geq 1}  \frac{\left(\sum\limits_{m=m_1\cdots m_k}\Lambda(m_1)\cdots \Lambda(m_k)\right)^2}{m^2}
 + \Ocal_{\varepsilon}\left(qe^{-q/X}+\frac{q}{X^{1-\varepsilon}}\right).}
\end{equation}
\item
\textbf{The second evaluation:} On selecting $\sigma=9/10$, $\varepsilon=1/10$ and $T=q$ in
Lemma~\ref{density}, we see that at 
most $\Ocal(q^{3/5})$ characters modulo a divisor of $q$ have a zero in the region
\begin{equation*}
|\Im \rho|\le q,\ \Re \rho\geq 9/10.
\end{equation*}
We call the characters which are in this set \textbf{bad characters} and the other set, the one of \textbf{good characters}. Now, we shift the line of integration in Eq \eqref{eq16} to 
\begin{description}
\item[$\bullet$] 
 $\Re s=9/10$ and $|\Im s|\le q$ when $\chi$ belongs to the good set;
\item[$\bullet$] $\Re s=1-c/\log q$ and $|\Im s|\le q$ when $\chi$ belongs to the bad set; Here $c$ is the constant from Lemma~\ref{ZFR}. 
\end{description}

Then, we have: 
\begin{itemize}
\item[(i)] 
For the bad character, when $\Re s=1-c/\log q$ and $|\Im s|\le q$. Lemma \ref{ZFR} gives us that 
\begin{equation*}
L'/L(s,\chi)\ll \log q. 
\end{equation*}
We have 
\begin{equation*}
\left|X^{s-1}\right| \leq X^{-\frac{c}{\log q}}.
\end{equation*} 
Thus  
\begin{eqnarray*}
\int\limits_{1-\frac{c}{\log q}-iq}^{1-\frac{c}{\log q}+iq}\left|G_q(s)X^{s-1}\Gamma(s-1)\, ds\right|
&\leq& 
q^{3/5}(\log q)^{2k}X^{-\frac{c}{\log q}}\int\limits_{1-\frac{c}{\log q}-iq}^{1-\frac{c}{\log q}+iq}\left|\Gamma(s-1)\right|\, ds
\\&\leq &
q^{3/5}(\log q)^{2k}X^{-\frac{c}{\log q}}\int\limits_{-q}^{q}\left|\Gamma\left(-\frac{c}{\log q}+it\right)\right|\, dt
\end{eqnarray*}
Recall that the formula of complex Stirling is given by 
\begin{equation*}
\Gamma(z+a)=\sqrt{2\pi} \, e^{-z}z^{z+a-1/2}\left(1+\Ocal\left(1/|z|\right)\right),
\end{equation*}
where $a$ is a complex number fixed, $|\arg z| \leq \pi$
and $|z|\geq 1$. Thanks to this formula, we deduce that 
\begin{equation*}
\left|\Gamma\left(-\frac{c}{\log q}+it\right)\right| \ll \sqrt{2\pi} \, |t|^{-\frac{c}{\log q}-\frac{1}{2}}\, e^{-\pi t/2},
\end{equation*}
and that 
\begin{equation}
\label{eq7}
\int\limits_{1-\frac{c}{\log q}-iq}^{1-\frac{c}{\log q}+iq}\left|G_q(s)X^{s-1}\Gamma(s-1)\, ds\right| 
\ll 
q^{3/5}(\log q)^{2k}X^{-\frac{c}{\log q}}.
\end{equation}
\item[(ii)]
Even for the exceptional one, we have: 
\begin{equation*}
L'/L(s,\chi)\ll_{\varepsilon} q^{\varepsilon}, 
\end{equation*}
when $\Re s= 1-c/q^\varepsilon$ and $|\Im s|\leq 1$. Then, we get 
\begin{eqnarray}
\label{eq8}
\int\limits_{1-\frac{c}{ q^\varepsilon}-i}^{1-\frac{c}{ q^\varepsilon}+i}\left|G_q(s)X^{s-1}\Gamma(s-1)\, ds\right|
&\leq& 
q^{2k\varepsilon}X^{-\frac{c}{q^\varepsilon}}\int\limits_{-1}^{1}\left|\Gamma\left(-\frac{c}{ q^\varepsilon}+it\right)\right|\, dt \nonumber
\\&\ll_{\varepsilon} &
q^{2k\varepsilon}X^{-\frac{c}{ q^\varepsilon}} \log^\varepsilon q
\end{eqnarray}
\item[(iii)]
For a good character, when $\Re s=\frac{9}{10}$ and $|\Im s|\le q$. Lemma~\ref{toupie} gives us that
\begin{equation*}
L'/L(s,\chi)\ll \log q.  
\end{equation*}
 We have also 
\begin{equation*}
\left|X^{s-1}\right| \leq X^{-\frac{1}{10}}.
\end{equation*} 
We deduce that 
\begin{eqnarray*}
\int\limits_{\frac{9}{10}-iq}^{\frac{9}{10}+iq}\left|G_q(s)X^{s-1}\Gamma(s-1)\, ds\right|
&\leq& 
\varphi(q)X^{-\frac{1}{10}} \log^{2k} q \int\limits_{\frac{9}{10}-iq}^{\frac{9}{10}+iq}\left|\Gamma(s-1)\right|\, ds
\\&\leq &
q X^{-\frac{1}{10}} \log^{2k} q \int\limits_{0}^{1}\left|\Gamma\left(-\frac{1}{10}+it\right)\right|\, dt
\end{eqnarray*}
Applying again the formula of Stirling, we get 
\begin{equation*}
\left|\Gamma\left(-\frac{1}{10}+it\right)\right| \ll \sqrt{2\pi} \, |t|^{-\frac{3}{5}}\, e^{-\pi t/2}. 
\end{equation*}
Therefore, we have
\begin{equation}
\label{eq9}
\int\limits_{\frac{9}{10}-iq}^{\frac{9}{10}+iq}\left|G_q(s)X^{s-1}\Gamma(s-1)\, ds\right| 
\ll 
q X^{-\frac{1}{10}} \log^{2k} q
\end{equation}
\item[(vi)] For all characters in the region $\Re s=2$ and $\left|\Im s\right| \geq q$, we find that
\begin{equation}
\label{eq10}
\int\limits_{\substack{\Re s=2\\  \left|\Im s\right| \geq q}}\left|G_q(s)X^{s-1}\Gamma(s-1)\, ds\right| 
\ll 
 X q \log ^{2k}q\int\limits_{t\geq q}\left|\Gamma(1+it)\right|\, dt 
\ll 
X q \log ^{2k}q \, e^{-q\pi/2}
\end{equation}
Since $q$ is a prime number. From Eq \eqref{eq7}, \eqref{eq8}, \eqref{eq9} and \eqref{eq10}, we obtain that 
\begin{equation}
\label{eq18}
\boxed{S_q(X)=\sum_{\substack{\chi \mkern3mu \mathrel{\textsl{mod}} \mkern3mu q\\ \chi \neq \chi_0}}
    \left|\frac{L'}{L}(1,\chi)\right|^{2k}
    +\Ocal\left(
    q^{3/5}X^{-\frac{c}{\log q}}+qX^{-\frac{1}{10}}\right).}
\end{equation}
The first term on the right-hand side above comes from case $s=1$. 
\end{itemize}
 From  Eq \eqref{eq25} and \eqref{eq18}, we conclude that 
 \begin{multline*}
\sum_{\substack{\chi \mkern3mu \mathrel{\textsl{mod}} \mkern3mu q\\ \chi \neq \chi_0}}
\left|\frac{L'}{L}(1,\chi)\right|^{2k}
 =
(q-2)\sum_{m\geq 1}  \frac{\left(\sum\limits_{m=m_1\cdots m_k}\Lambda(m_1)\cdots \Lambda(m_k)\right)^2}{{m}^2}
 \\
 +\Ocal_{\varepsilon}\left(qe^{-q/X}+\frac{q}{X^{1-\varepsilon}}+
    q^{3/5}X^{-\frac{c}{\log q}}+qX^{-\frac{1}{10}}\right)
 \end{multline*}
 For $X= q^{1-\varepsilon}$ and $\varepsilon>0$, we conclude that 
 \begin{equation*}
 \sum_{\substack{\chi \mkern3mu \mathrel{\textsl{mod}} \mkern3mu q\\ \chi \neq \chi_0}}
 \left|\frac{L'}{L}(1,\chi)\right|^{2k}
  =
(q-2)\sum_{m\geq 1}  \frac{\left(\sum\limits_{m=m_1 \cdots m_k}\Lambda(m_1)\cdots \Lambda(m_k)\right)^2}{{m}^2}+\Ocal_{\varepsilon}\left(q^{9/10+\varepsilon}\right).
 \end{equation*}
 \end{enumerate}
 This completes the proof.
 \section{Proof of Theorem~\ref{Thm2}}
 We recall that $\mu_q$ is defined by $\mu_q\left([0, t]\right)= D_q(t)$, where $D_q(t)$ is given by Eq~\eqref{eq20}. Then, we have $\mu_q$ is non-negative and $\mu_q ([0, \infty[ )=1$. 
Setting 
 \begin{equation*}
m_k= m_k(q)=\int\limits_{0}^{\infty}t^k \, d\mu_q(t),
 \end{equation*}
 and 
\begin{equation*}
\Delta_k(q)=
  \left| {\begin{array}{ccccc}
   m_0 & m_1 & m_2 & \cdots& m_k\\
   m_1 & m_2& m_3& \cdots& m_{k+1}\\
   m_2& m_3& m_4& \cdots& m_{k+2}\\
  \vdots& \vdots& \vdots& \ddots& \vdots\\
   m_k& m_{k+1}& m_{k+2}& \cdots& m_{2k}
  \end{array} } \right|,
  \quad 
  \Delta^\star_k(q)=
    \left| {\begin{array}{ccccc}
     m_1 & m_2 & m_3 & \cdots& m_{k+1}\\
     m_2 & m_3& m_4& \cdots& m_{k+2}\\
     m_3& m_4& m_5& \cdots& m_{k+3}\\
     \vdots& \vdots& \vdots& \ddots& \vdots\\
     m_{k+1}& m_{k+2}& m_{k+3}& \cdots& m_{2k+1}
    \end{array} } \right|
\end{equation*}
 For $q$ is fixed, we have $\Delta_k(q)$ and $\Delta^\star_k(q)$ are non-negative. Thus, when $q$ goes to infinity, we get  $\Delta_k(\infty)$ and $\Delta^\star_k(\infty)$ are non-negative for all $k\geq 1$. By the solution of the Stieltjes moment problem, we deduce that $\mu$ exists.
 
In order to complete our proof, it remains to show that $\mu$ unique. Define 
 \begin{equation}
 \label{eq19}
 M_k= \sum_{m\geq 1}\frac{\left(\sum\limits_{m=m_1\cdot m_2\cdots m_k}\Lambda(m_1)\cdots\Lambda(m_k)\right)^2}{m^2}.
 \end{equation}
Using Lemma~\ref{lem5}, we get 
\begin{equation*}
M_k \leq \sum_{m\geq 2^k} \frac{(\log m)^{2k}}{m^2}.
\end{equation*}
Now, we notice that 
\begin{eqnarray*}
\sum_{m\geq 2^k} \frac{(\log m)^{2k}}{m^2} 
&\leq  & 
\int_{2^k}^{\infty} \frac{(\log t)^{2k}}{t^2}\, dt + \frac{(\log e^k)^{2k}}{e^{2k}}
\\&\leq & \int_{k\log 2}^{\infty} u^{2k}\, e^{-u}\, du +
\left(\frac{k}{e}\right)^{2k}
\leq  \Gamma(2k+1) +\left(\frac{k}{e}\right)^{2k}
= (2k)!+ \left(\frac{k}{e}\right)^{2k}.
\end{eqnarray*}
Then, we have 
\begin{equation}
\label{eq21}
M_k \leq (2k)!+\left(\frac{k}{e}\right)^{2k}.
\end{equation}
Therefore, we get
\begin{equation*}
\sum_{k_\geq 1}\frac{1}{M_k^{\frac{1}{2k}}} \gg \sum_{k\geq 1}\left(\frac{1}{(2k)!}\right)^{\frac{1}{2k}}+
\sum_{k\geq 1} \frac{e}{k}
= \infty
\end{equation*}
It follows that the condition of Carleman is checked and thus the function $\mu$ is unique. This completes the proof. 
 \section{Proof of Corollary~\ref{cor2}}
We note that 
\begin{equation*}
\frac{1}{q-2}\sum_{\substack{\chi \mkern3mu \mathrel{\textsl{mod}} \mkern3mu q\\ \chi \neq \chi_0}}\left|\frac{L^\prime}{L}(1, \chi)\right|^{2k}
\geq 
\frac{t^{2k}}{q-2}\# \left\{ \chi\neq \chi_0 \mkern3mu \mathrel{\textsl{mod}} \mkern3mu q \ ; \ \left|\frac{L^\prime}{L}(1, \chi)\right| \geq  t\right\}
\end{equation*} 
Then, we have 
\begin{equation*}
\frac{1}{q-2}\# \left\{ \chi\neq \chi_0 \mkern3mu \mathrel{\textsl{mod}} \mkern3mu q \ ; \ \left|\frac{L^\prime}{L}(1, \chi)\right| \geq  t\right\}
\leq 
\frac{M_k}{t^{2k}}
\end{equation*} 
where $M_{k}$ is given by Eq~\eqref{eq19} and $M_{k} \ll (2k)!$ by the preceding proof. Thanks to the Stirling formula, we write 
\begin{equation*}
\frac{1}{q-2}\# \left\{ \chi\neq \chi_0 \mkern3mu \mathrel{\textsl{mod}} \mkern3mu q \ ; \ \left|\frac{L^\prime}{L}(1, \chi)\right| \geq  t\right\}
\ll
\left(\frac{4k^2}{e^2\, t}\right)^{k} \sqrt{4\pi k}.
\end{equation*}
Taking $k= \sqrt{t}$, we get 
\begin{equation*}
\liminf_q\frac{1}{q-2}\# \left\{ \chi\neq \chi_0 \mkern3mu \mathrel{\textsl{mod}} \mkern3mu q \ ; \ \left|\frac{L^\prime}{L}(1, \chi)\right| \geq  t\right\}
\ll
e^{-\sqrt{t}/2}.
\end{equation*}
This completes the proof.
 \section{Scripts}
We present here an easier GP script for computing the values $\left|L^\prime /L(1, \chi)\right|$. In this loop, we use the Pari package " ComputeL" written by Tim Dokchitser to compute values of $L$- functions and its derivative. This package is available on-line at 
 \begin{center}
 {\tt www.maths.bris.ac.uk/\~{}matyd/}
 \end{center}
 On this base we write the next script. I am indebted to Olivier Ramar\'e for helping me in writing it. We simply plot Figure~\eqref{fig} via \\
 
  \begin{verbatim}
   read("computeL"); /* by Tim Dokchitser */
  default(realprecision,28); 
  {run(p=37)=
     local(results, prim, avec);
     prim = znprimroot(p);
     results = vector(p-2, i, 0);
     for(b = 1, p-2,
        avec = vector(p,k,0);
        for (k = 0, p-1, avec[lift(prim^k)+1]=exp(2*b*Pi*I*k/(p-1)));
        conductor = p; 
        gammaV    = [1];
        weight    = b%2; 
        sgn       = X;
        initLdata("avec[k%p+1]",,"conj(avec[k%p+1])"); 
        sgneq = Vec(checkfeq());
        sgn   = -sgneq[2]/sgneq[1]; 
        results[b] = abs(L(1,,1)/L(1));
           \\print(results[b]);
        );
     return(results);
  }

  {goodrun(borneinf, bornesup)=
     forprime(p = borneinf, bornesup,
              print("------------------------");
              print("p = ",p);
              print(vecsort(run(p))));}
  \end{verbatim}

\mytitle{Sumaia Saad Eddin}\\
Institute of Financial Mathematics\\
and Applied Number Theory,\\ Johannes Kepler University Linz,\\ Altenbergerstrasse 69, 4040 Linz, Austria.\\
{sumaia.saad\_eddin@jku.at}


\begin{thebibliography}{00}
\bibitem{B.S} 
 M. Bhargava, A. Shankar,
 \textit{Ternary cubic forms having bounded invariants, and the existence of a positive proportion of elliptic curves having rank $0$}, 
Annals of Mathematics 181 (2015) no. 2 587-–621.
 
    \bibitem{B.S.D} 
B. Birch and H.P.F. Swinnerton-Dyer,
\textit{Notes on elliptic curves II},
J. reine angew. Math. 218 (1965) 79-–108.
 
\bibitem{B.C.D.T} 
C. Breuil, B. Conrad, F. Diamond and R. Taylor,
\textit{On the Modularity of Elliptic Curves over $\mathbb{Q}$: Wild $3$-Adic Exercises},
Journal of the American Mathematical Society 14 (2001), no. 4 843--939.

 \bibitem{B.G.Z} 
J. Buhler, B.H. Gross and D.B. Zagier,
\textit{On the conjecture of Birch and Swinnerton-Dyer
for an elliptic curve of rank $3$},
Math. Comp. 44 (1985) 473--481.
          
 \bibitem{H2010} 
 H. M. Bui and D. R. Heath-Brown,
\textit{A note on the fourth moment of Dirichlet $L$-functions},
 Acta Arith 141 (2010) 335--344.
 
 
\bibitem{Ca} 
J.W.S. Cassels,
\textit{Arithmetic on curves of genus $1$. VIII. On conjectures of Birch and Swinnerton-Dyer},
J. reine angew. Math. 217 (1965) 180–-199.
          
 \bibitem{J.A} 
 J. Coates and A. Wiles,
\textit{On the conjecture of Birch and Swinnerton-Dyer},
Inventiones Mathematicae 39 (1977) no. 3  223--251.
       
  
    \bibitem{Cr} 
J.E. Cremona,
  \textit{Algorithms for modular elliptic curves. 2nd edition},
Cambridge Univ. Press, Cambridge (1997) MR 93m:11053.
         
         
\bibitem{C} 
J. Cremona,
\textit{Ternary cubic forms having bounded invariants, and the existence of a positive proportion of elliptic curves having rank $0$},
Annals of Mathematics 181 (2015) no. 2 587-–621.
           
   
\bibitem{D.D.P} 
S. Dasgupta, H. Darmon and R. Pollack,
\textit{Hilbert modular forms and the Gross-Stark conjecture},
Annals of Math. 174 (2011) no. 1 439--484.
   
   \bibitem{H81} 
D. R. Heath-Brown,
\textit{The fourth power mean of Dirichlet's $L$-functions},
Analysis 1  (1981) pp. 25--32.
  
  \bibitem{B1} 
D. R. Heath-Brown,
  \textit{Zero-free regions for Dirichlet $L$ functions and the least prime in an arithmetic progression},
Proc. London Math. Soc., III Ser. 64 (1992a) no. 2 265--338.
  
  \bibitem{B2} 
D. R. Heath-Brown,
  \textit{Zero-free regions of $\zeta(s)$ and $L(s, \chi)$},
Proceeding of the Amalfi conference on Nalytic Number Theory (1992b) 195--200.
       
 
\bibitem{H.J} 
 M.N Huxley and M. Jutila,
\textit{Large values of Dirichlet polynomials IV},
 Acta Arith 32 (1977) 297--312.
       
       
     \bibitem{I.M2011} 
Y. Ihara and K. Matsumoto,
     \textit{On certain mean values and the value-distribution of logarithms of Dirichlet L-functions},
Quart. J. Math. (Oxford) 62 (2011) 637--677.
          
          
    \bibitem{I.M2014} 
Y. Ihara and K. Matsumoto,
\textit{On the value-distribution of logarithms derivatives of Dirichlet L-functions},
Analytic Number Theory, Approximation Theory and Special Functions, in Honor of H. M. Srivastava, G. V. Milovanovi{\'c} and M. Th. Rassias (eds.), Springer  (2014) 79--91.
          
  \bibitem{G.Z} 
 B. H. Gross and D. B. Zagier,
\textit{Heegner points and derivatives of $L$-series},
Inventiones Mathematicae= 84 (1986) no. 2 225--320.

  \bibitem{K} 
V. A. Kolyvagin,
\textit{Finiteness of $E(\mathbb{Q})$ and $X(E, \mathbb{Q})$ for a class of Weil curves},
Math. USSR-Izvestiya 32 (1989) pp. 523--541.
    
\bibitem{Lou} 
S. Louboutin,
\textit{A twisted quadratic for Dirichlet $L$ functions},
 Proc. Amer. Math. Soc. 142 (2014) no. 5 1539--1544.

\bibitem{GP} 
The PARI Group, Bordeaux,
PARI/GP,
version 2.5.2, (2011) http://pari.math.u-bordeaux.fr/.
     
          
  \bibitem{P.R2015} 
D. Platt and O. Ramar\'e,
  \textit{Explicit estimates: from $\Lambda(n)$ in arithmetic progressions to $\Lambda(n)/n$},
yet unpublished.
     
\bibitem{R.R} 
 O. Ramar\'e and R. Rumely,
\textit{Primes in arithmetic progressions},
Mathematics of Computation 65 (1996) no. 213 397--425.
               
     
   \bibitem{R} 
K. Rubin,
\textit{The main conjectures of Iwasawa theory for imaginary quadratic fields},
Inventiones Mathematicae 103 (1991) no. 1 25--68.
    
\bibitem{R2} 
 k. Rubin,
\textit{A Stark conjecture over $\mathbb{Z}$ for abelian L-functions with multiple zeros},
 Université de Grenoble. Annales de l'Institut Fourier 46 (1996) no. 1 33–-62.
         
  \bibitem{Sound} 
 K. Soundararajan,
\textit{The fourth moment of Dirichlet $L$-functions},
Clay Mathematics Proceedings 7 (2007) 239--246.
  
  \bibitem{St1} 
H. Stark,
\textit{$L$-functions at $s=1$ I. $L$-functions for quadratic forms},
Advances in Math. 7 (1971) 301–-343.
       
\bibitem{St2} 
H. Stark,
\textit{$L$-functions at $s=1$ II. Artin $L$-functions with rational characters},
Advances in Math. 17 (1975) no. 1 60-–92.
     
\bibitem{St3} 
H. Stark,
\textit{$L$-functions at $s=1$ III. Totally real fields and Hilbert's twelfth problem},
 Advances in Math. 22 (1976) no. 1 64-–84.

\bibitem{St4} 
 H. Stark,
\textit{$L$-functions at $s=1$ VI. First derivatives at $s = 0$},
 Advances in Math. 35 (1980) no. 3 197-–235.
     
\bibitem{Ti} 
E.C. Titchmarch,
\textit{The Theory of Riemann Zeta Function},
Oxford Univ. Press, Oxford  (1951).
     
\bibitem{V} 
K. Ventullo,
\textit{On the rank one abelian Gross-Stark conjecture},
arXiv:1308.2261 (2013).
     
\bibitem{W} 
H. Walum,
\textit{An exactformula for an average of $L$-series},
IIlinois J. Math. 26 (1982) 1--3.
     
\bibitem{Y} 
 M. Young,
\textit{The fourth moment of Dirichlet $L$-functions},
Annals of Mathematics 7 (2011) 239--246.
      
\bibitem{Zh} 
 W. Zhang, \textit{Lecture Notes in Contemporary Mathematics},
 Science Press, Beijing (1989) 173--179.
     
\bibitem{Zh.W} 
W. Zhang and W. Wang,
\textit{An exact calculating formula for the $2k$-th power mean of $L$-functions},
 JP Jour. Algebra. Number Theory and Appl. 2 (2002) no. 2 195--203.

\end{thebibliography}
\end{document}